\documentclass[11pt]{article}
\usepackage[margin=1in]{geometry}
\usepackage{times}
\usepackage[round,authoryear]{natbib}

\usepackage{amsmath,amssymb,amsthm}
\usepackage{booktabs}
\usepackage{array}
\usepackage{graphicx}
\usepackage{mathtools}
\usepackage{microtype}
\usepackage{placeins}
\usepackage{float}
\usepackage{hyperref}
\usepackage{url}
\hypersetup{colorlinks=true,linkcolor=blue,citecolor=blue,urlcolor=blue,
  pdftitle={Sampled-Max Subgradient Method for Convex Finite-Max Optimization},
  pdfauthor={Egor Gladin, Anna Popova, Georgii Babinskii}}

\newcommand{\defeq}{\coloneqq}
\newcommand{\SMax}{\textsc{SMax}}
\newcommand{\X}{\mathcal X}

\newtheorem{theorem}{Theorem}
\newtheorem{lemma}{Lemma}
\newtheorem{proposition}{Proposition}
\newtheorem{corollary}{Corollary}
\newtheorem{remark}{Remark}
\newcounter{algorithm}

\title{Sampled-Max Subgradient Method for Convex Finite-Max Optimization}

\author{Egor Gladin \quad Anna Popova \quad Georgii Babinskii\\HSE University}
\date{}

\begin{document}

\maketitle

\begin{abstract}
We study the Sampled-Max Subgradient Method (\SMax--SGM) for large convex
finite-max problems.  Each iteration maximizes over a fresh random subset of
the $N$ components and takes one subgradient of the sampled maximizer.  The
method is therefore stochastic subgradient descent on a sampled-max surrogate.  We bound the surrogate error by an
average-top-$k$ gap plus the probability of missing all top-$k$ components.
A localization argument requires these quantities only on a near-optimal
sublevel set.  Under a bounded subgradient-moment assumption, this yields
$O(\varepsilon^{-2})$ subgradient queries and, when $k$ components remain
nearly active in that set,
$\widetilde O((N/k)\varepsilon^{-2})$ component-value queries, capped by the
full-scan cost.  Conversely, a one-dimensional affine construction shows that
$\Omega(N/k)$ component-value queries can be necessary even when the
average-top-$k$ gap vanishes everywhere and subgradient queries are unlimited.
A tensor-grid specialization
explains when the subset size can become independent of grid cardinality.
Experiments on finite maxima with $2\times10^5$ and $5\times10^6$ components
show that, under the reported protocols, \SMax--SGM reaches a 5\%
numerical-reference gap with fewer component-value queries and lower optimizer
time than each comparison method that reaches the target.
\end{abstract}

\section{Introduction}
\label{sec:introduction}

We study large-scale convex finite-max optimization: minimizing
\(F(x)=\max_{1\le i\le N}f_i(x)\) over \(x\in\X\), where the component
functions \(f_i\) are convex.
Such objectives arise when \(N\)
represents a large collection of data points, groups, tasks, or uncertainty
scenarios.  Although standard subgradient methods do not require smoothness,
computing a subgradient of \(F\) generally requires first identifying an
 active component, i.e., one attaining the maximum over \(i\) at a query point.
 Without additional information about the active index, this entails evaluating
 all \(N\) component values at every iteration, even
though only one component subgradient is subsequently used.  Consequently,
the cost of identifying the maximum can dominate the optimization procedure.

Finite maxima are a natural model for worst-case design.  They describe robust
optimization over finite scenarios \citep{BertsimasEtAl2011Robust}, maximal
loss over training examples \citep{ShalevShwartzWexler2016}, and empirical
group distributionally robust optimization, which minimizes the largest group
risk \citep{SagawaEtAl2020GroupDRO,YuEtAl2024GroupDRO}.  Discretized uniform
(Chebyshev) approximation also produces large finite maxima, including
classical minimax linear-phase finite-impulse-response (FIR) design
\citep{Rabiner1972FIRLP,McClellanParksRabiner1973} and variable
fractional-delay filter design \citep{ZhaoTay2023VFD}.  A different finite-max
application arises in private inference, where low-degree polynomial
activations replace nonlinearities that are expensive under homomorphic
encryption
\citep{GiladBachrachEtAl2016CryptoNets,LeeEtAl2023ApproxCNN,TongEtAl2024SmartPAF}.
Calibrating such replacements against downstream distortions over many
examples and classes produces another large finite maximum.  In the gridded
setting, finer grids improve fidelity but increase the cost of a full scan;
continuity simultaneously creates neighboring residuals with similar values,
suggesting that the maximum may be detectable from a small random subset.
Across these applications, the central question is whether one can avoid a
full scan without losing control of the original maximum objective.

We investigate this possibility through \SMax--SGM (Sampled-Max Subgradient Method).  At each iteration the
method samples \(m\) components uniformly without replacement, with \(m\ll N\)
in the favorable regime analyzed below, evaluates their values, selects the
sampled maximizer, and queries one stochastic subgradient of that component.
It therefore separates the number of component-value queries from the number
of stochastic subgradient queries.  The selected vector is an unbiased
stochastic subgradient of the convex surrogate
\(\widetilde F_m(x)=\mathbb E_S[\max_{i\in S}f_i(x)]\), so the method is
projected stochastic subgradient descent on \(\widetilde F_m\), rather than
directly on the full
maximum.

Our analysis identifies when optimizing the sampled surrogate still controls
the original maximum.  When many components are nearly maximal around the
relevant solutions, a small random subset is likely to contain a useful
component.  Localization requires this condition only in the region associated
with the target accuracy.  Under a bounded stochastic-subgradient moment
assumption, the usual \(O(\varepsilon^{-2})\) subgradient-query count is retained,
while the component-value query count can be sublinear in \(N\).  On
sufficiently fine Chebyshev grids, regularity creates
many similar neighboring residuals, and the required sample size can
eventually become independent of the grid cardinality.
The resulting value-query gain is structural and may diminish when only a
few components are nearly maximal.

This structure makes \SMax--SGM attractive when the goal is moderate accuracy
under a limited computational budget, because many components may be nearly
active and a small subset can give useful progress quickly.
Our experiments test this moderate-accuracy use case on a 200,000-component
variable fractional-delay filter design and a 5,000,000-component
polynomial-activation calibration problem motivated by private inference.  In
both cases, \SMax--SGM reaches a 5\% numerical-reference gap with substantially
fewer component-value queries than the competing methods that reach the
target, and it has the shortest optimizer time to that target.

Our contributions are:
\begin{itemize}
    \item We derive an average-top-\(k\) approximation bound for the sampled-max
    surrogate and a localization argument that transfers surrogate
    suboptimality to the original finite maximum.
    \item We combine this transfer bound with stochastic subgradient analysis
    to obtain a condition-dependent, sublinear-in-\(N\) value-query guarantee,
    while using only one stochastic subgradient per iteration.  An
    \(\Omega(N/k)\) value-query lower bound shows that its structural dependence
    on the number of near-active components cannot in general be improved.
    \item We specialize the theory to fine-grid Chebyshev approximation and
    evaluate the method on two large problems, documenting its early-budget
    performance and component-value savings.
\end{itemize}

\paragraph{Notation.}
For \(N\in\mathbb N\), let \([N]\defeq\{1,\ldots,N\}\).  We write
\(\|\cdot\|_2\) for the Euclidean norm, \(\partial f(x)\) for the convex
subdifferential of \(f\) at \(x\), and
\(\Pi_{\X}(x)\defeq\arg\min_{z\in\X}\|z-x\|_2\) for the Euclidean projection
onto a nonempty closed convex set \(\X\).  We denote the probability simplex by
\(\Delta_N\defeq\{\lambda\in\mathbb R_+^N:\sum_{i=1}^N\lambda_i=1\}\), and use
\(\mathrm{i}\defeq\sqrt{-1}\) for the imaginary unit.

\paragraph{Organization.}
Section~\ref{sec:related-work} reviews related finite-max methods.
Section~\ref{sec:method} presents \SMax--SGM and its localized guarantees,
and Section~\ref{sec:chebyshev-grid} specializes them to fine tensor grids.
Section~\ref{sec:experiments} reports the two numerical applications, and
Section~\ref{sec:conclusion} concludes.  All proofs and detailed experimental
protocols are collected in the appendix.

\section{Related Work}
\label{sec:related-work}

The direct baseline is projected subgradient descent on the original maximum.
It needs \(O(\varepsilon^{-2})\) iterations under standard Lipschitz and
bounded-domain assumptions, but identifying an active component costs \(N\)
values per iteration.  For the same nonsmooth finite-max class,
\citet{CarmonEtAl2021} use localized ball optimization to obtain
\(\widetilde O(N\varepsilon^{-2/3}+\varepsilon^{-8/3})\) component-value
evaluations and \(\widetilde O(\varepsilon^{-8/3})\) component-subgradient
evaluations.  They also prove an \(\Omega(N\varepsilon^{-2/3})\) lower bound in
their joint component first-order oracle, up to logarithmic factors.  This
worst-case lower bound is compatible with our localized result, which
identifies structured instances with sublinear value-query dependence on \(N\).

LogSumExp smooths the outer maximum; accelerated optimization of the smooth
surrogate takes \(\widetilde O(\varepsilon^{-1})\) full-gradient iterations,
corresponding to \(\widetilde O(N\varepsilon^{-1})\) component-value and
component-gradient evaluations in the standard dense implementation
\citep{Nesterov2005Smoothing,BeckTeboulle2012}.  Among convex overestimators of
the maximum that are 1-smooth with respect to \(\ell_\infty\), the
\(\Theta(\log N)\) worst-case approximation error of LogSumExp is optimal up to constants
\citep{SamakhoanaGrimmer2025}.  For finite maxima this acceleration requires
differentiable, sufficiently smooth component functions.  Adaptive smoothing
uses feedback precision adjustment \citep{PolakRoysetWomersley2003}.
\citet{PeeRoyset2011} combine exponential smoothing with an active-set strategy
and report favorable behavior when many functions are nearly active.  The
exact gradient of the standard LogSumExp still aggregates all
\(N\) components.  Smooth active-component identification can reduce the
working set under additional differentiability assumptions
\citep{RasTamUteda2025}, while recent work proposes a stochastic approximation
of LogSumExp that avoids the full sum by optimizing a different smooth
surrogate \citep{GladinEtAl2025LSE}.

The identity
\(
    \max_{i\in[N]} f_i(x)
    =\max_{\lambda\in\Delta_N}\sum_{i=1}^N\lambda_i f_i(x)
\)
gives a convex--concave saddle formulation with a simplex-valued dual variable.
For smooth components, Mirror-Prox and general nonlinear-coupling primal--dual
methods apply \citep{Nemirovski2004MirrorProx,HamedaniAybat2021}; nonsmooth and
finite-sum variants are treated by \citet{ZhuLiuTranDinh2022} and
\citet{AlacaogluMalitsky2022}.  \citet{RasTamUteda2025} study this formulation
specifically for smooth finite-max minimization and active-component
identification; empirical group DRO is an application-specific example
\citep{YuEtAl2024GroupDRO}.  In the finite-max specialization, these methods
maintain an \(N\)-dimensional dual vector and impose operator assumptions that
differ from our component-oracle model.
Working-set and exchange methods instead maintain a selected component or
constraint set.
They range from incremental methods using incomplete objective information
\citep{GaudiosoEtAl2006} to SQP and exchange algorithms designed for finely
discretized minimax and semi-infinite programs
\citep{ZhouTits1996,ZhangEtAl2010Exchange}.

A direct algorithmic predecessor of \SMax--SGM is Ordered SGD
\citep{KawaguchiLu2020}, which samples a minibatch and differentiates the
average of its \(q\) largest losses.  When \(q=1\), its update coincides with
\SMax--SGM and its ordered objective coincides with \(\widetilde F_m\) (up to a separate
regularizer).  Ordered SGD establishes convergence to this ordered objective;
our contribution is an explicit localized guarantee relating the sampled
surrogate to the original finite maximum.  Average-top-\(k\) loss similarly
targets a different aggregation \citep{FanEtAl2017TopK}.

Table~\ref{tab:related-work} summarizes the closest first-order query bounds.
We separate component-value from component-(sub)gradient evaluations whenever
the analysis does so.

\FloatBarrier
\begin{table}[!htb]
\caption{Representative component-query upper and lower bounds for convex
finite-max optimization.  A check mark indicates support for nonsmooth
components.  The bounds suppress geometry and moment constants, and
\(\widetilde O\) hides logarithmic factors.}
\label{tab:related-work}
\centering
\small
\setlength{\tabcolsep}{3pt}
\renewcommand{\arraystretch}{1.08}
\begin{tabular}{>{\raggedright\arraybackslash}p{0.24\linewidth}
                >{\centering\arraybackslash}m{0.12\linewidth}
                >{\centering\arraybackslash}m{0.27\linewidth}
                >{\centering\arraybackslash}m{0.27\linewidth}}
\toprule
Approach & Nonsmooth? & Value queries & (Sub)gradient queries \\
\midrule
Full-scan SGM \citep{NemirovskiEtAl2009SA,CarmonEtAl2021}
& \(\checkmark\)
& \(O(N\varepsilon^{-2})\)
& \(O(\varepsilon^{-2})\) \\

Smooth surrogate \citep{Nesterov2005Smoothing,BeckTeboulle2012}
& \(\times\)
& \(\widetilde O(N\varepsilon^{-1})\)
& \(\widetilde O(N\varepsilon^{-1})\) \\

Ball-oracle acceleration \citep{CarmonEtAl2021}
& \(\checkmark\)
& \(\widetilde O(N\varepsilon^{-2/3}+\varepsilon^{-8/3})\)
& \(\widetilde O(\varepsilon^{-8/3})\) \\

\textbf{\SMax--SGM} (Theorem~\ref{thm:localized-complexity})
& \(\checkmark\)
& \(\widetilde O\!\left(\frac Nk\varepsilon^{-2}\right)\)
& \(O(\varepsilon^{-2})\) \\
\midrule
Joint-oracle lower bound \citep{CarmonEtAl2021}
& \(\checkmark\)
& \multicolumn{2}{c}{\(\Omega(N\varepsilon^{-2/3})\) joint queries} \\

Our value lower bound (Theorem~\ref{thm:value-lower-bound})
& \(\checkmark\)
& \(\Omega(N/k)\)
& unrestricted \\
\bottomrule
\end{tabular}
\end{table}

The smooth-surrogate rate requires Lipschitz gradients.  The \SMax--SGM rate
assumes the localized average-top-\(k\) condition and a uniform component-moment
bound; its exact capped rate is~\eqref{eq:query-complexity}.  The lower-bound
rows are incomparable: \citet{CarmonEtAl2021} count joint value-and-first-order
queries in the worst case, with no \(k\) parameter, whereas
Theorem~\ref{thm:value-lower-bound} assumes \(\operatorname{gap}_k(\X)=0\) but
forces value queries even when subgradient queries are unrestricted.

\section{\SMax--SGM and Localized Guarantees}
\label{sec:method}

We consider
\begin{equation}
    \min_{x\in\X} F(x),
    \qquad
    F(x)\defeq\max_{i\in[N]}f_i(x),
    \label{eq:problem}
\end{equation}
where \(\X\subseteq\mathbb R^d\) is nonempty, compact, and convex, and every
\(f_i:\mathbb R^d\to\mathbb R\) is convex.  Let
\(D_{\X}\defeq\sup_{x,z\in\X}\|x-z\|_2\).  Component-value queries return
\(f_i(x)\) exactly.  We assume that, conditional on the information
\(\mathcal H\) available before an oracle call, the stochastic subgradient
returned at any \(\mathcal H\)-measurable query \((x,i)\) satisfies, almost
surely,
\[
    \mathbb E[g_i(x;\zeta)\mid\mathcal H]\in\partial f_i(x),
    \qquad
    \mathbb E[\|g_i(x;\zeta)\|_2^2\mid\mathcal H]\le G^2.
\]

\begin{center}
\refstepcounter{algorithm}\label{alg:smax}
\fbox{\begin{minipage}{0.94\linewidth}
\textbf{Algorithm \thealgorithm: \SMax--SGM}\\[-0.25ex]
\textbf{Input:} horizon \(T\in\mathbb N\), initial point \(x_1\in\X\), subset
size \(m\in[N]\),
stepsizes \(\eta_1,\ldots,\eta_T>0\), and a deterministic tie-breaking rule.
\smallskip

\begin{tabular}{@{}r@{\hspace{0.55em}}p{0.84\linewidth}@{}}
1: & \textbf{for} \(t=1,\ldots,T\) \textbf{do} \\
2: & \quad Draw a uniform size-\(m\) subset \(S_t\subset[N]\). \\
3: & \quad Query \(f_i(x_t)\) for \(i\in S_t\), and choose
      \(I_t\in\arg\max_{i\in S_t}f_i(x_t)\). \\
4: & \quad Query \(g_{I_t}(x_t;\zeta_t)\), and set
      \(x_{t+1}=\Pi_{\X}(x_t-\eta_tg_{I_t}(x_t;\zeta_t))\). \\
5: & \textbf{end for} \\
6: & \textbf{return} \(\bar x_T=T^{-1}\sum_{t=1}^T x_t\). \\
\end{tabular}
\end{minipage}}
\end{center}

Each iteration uses \(m\) component-value queries and one stochastic
subgradient query.  To identify its objective, define the convex surrogate
\begin{equation}
    \widetilde F_m(x)
    \defeq
    \mathbb E_S[F_S(x)],
    \qquad
    F_S(x)\defeq\max_{i\in S}f_i(x),
    \label{eq:smax-surrogate}
\end{equation}
where \(S\) is uniform among all size-\(m\) subsets of \([N]\).  Let
\(\mathcal H_t\) be the history before \(S_t\) is drawn.

\begin{lemma}
\label{lem:surrogate-gradient}
For every iteration of Algorithm~\ref{alg:smax},
\[
    \mathbb E[g_{I_t}(x_t;\zeta_t)\mid\mathcal H_t]
    \in\partial\widetilde F_m(x_t),
    \qquad
    \mathbb E[\|g_{I_t}(x_t;\zeta_t)\|_2^2\mid\mathcal H_t]\le G^2.
\]
Consequently, Algorithm~\ref{alg:smax} is projected stochastic subgradient
descent on \(\widetilde F_m\), using the selected component's subgradient.
\end{lemma}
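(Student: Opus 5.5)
The plan is to condition in two stages: first on \(\mathcal H_t\) together with the drawn subset \(S_t\), and then average over \(S_t\). Let \(\mathcal G_t\) be the information available just before the subgradient oracle is called at iteration \(t\). It contains \(\mathcal H_t\), \(S_t\), and the queried values \(f_i(x_t)\), \(i\in S_t\). Since \(x_t\) is \(\mathcal H_t\)-measurable and the tie-breaking rule is deterministic, the index \(I_t\) is \(\mathcal G_t\)-measurable. The query \((x_t,I_t)\) is therefore \(\mathcal G_t\)-measurable, and the oracle assumption applied with \(\mathcal H=\mathcal G_t\) gives
\[
v_t\defeq\mathbb E[g_{I_t}(x_t;\zeta_t)\mid\mathcal G_t]\in\partial f_{I_t}(x_t),\qquad \mathbb E[\|g_{I_t}(x_t;\zeta_t)\|_2^2\mid\mathcal G_t]\le G^2 .
\]
The tower property over \(\mathcal H_t\subseteq\mathcal G_t\) immediately yields the second-moment claim.

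Next comes a pointwise step that holds for each fixed subset \(S\). Since \(I_t\) attains \(F_S(x_t)=\max_{i\in S}f_i(x_t)\), any \(v\in\partial f_{I_t}(x_t)\) satisfies, for all \(y\),
\[
F_S(y)\ge f_{I_t}(y)\ge f_{I_t}(x_t)+\langle v,y-x_t\rangle=F_S(x_t)+\langle v,y-x_t\rangle,
\]
so \(v\in\partial F_S(x_t)\). This is the standard fact that an active component's subgradient is a subgradient of the maximum, and it needs no differentiability.

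To pass to the surrogate, I will average over \(S_t\). Conditional on \(\mathcal H_t\), the subset \(S_t\) is uniform over the finitely many size-\(m\) subsets, independent of the past. The conditional expectation \(\bar v_t=\mathbb E[v_t\mid\mathcal H_t]\) is then, up to a version issue, a finite average \(\binom{N}{m}^{-1}\sum_S v_t(S)\), where each \(v_t(S)\in\partial F_S(x_t)\). Averaging the subgradient inequalities over \(S\) gives \(\widetilde F_m(y)\ge\widetilde F_m(x_t)+\langle\bar v_t,y-x_t\rangle\) for all \(y\), so \(\bar v_t\in\partial\widetilde F_m(x_t)\). The tower property then gives \(\mathbb E[g_{I_t}\mid\mathcal H_t]=\bar v_t\). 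Convexity of \(\widetilde F_m\) is immediate, since it is an average of finitely many maxima of convex functions. Together with the projection update, this identifies the method as projected stochastic subgradient descent on \(\widetilde F_m\).

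The main obstacle is measure-theoretic rather than conceptual. The averaging step needs an almost-sure statement that uses the independence of \(S_t\) from \(\mathcal H_t\). The cleanest route avoids writing \(v_t(S)\) as a function of \(S\) and works directly with the inequality. For every fixed \(y\in\mathbb R^d\), the pointwise step gives \(F_{S_t}(y)\ge F_{S_t}(x_t)+\langle v_t,y-x_t\rangle\) almost surely. Taking \(\mathbb E[\cdot\mid\mathcal H_t]\), and using that \(x_t\) is \(\mathcal H_t\)-measurable while \(S_t\) is uniform and independent of \(\mathcal H_t\), yields \(\widetilde F_m(y)\ge\widetilde F_m(x_t)+\langle\bar v_t,y-x_t\rangle\) almost surely. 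Here integrability is guaranteed because \(\|v_t\|_2\le G\) by Jensen. This holds simultaneously for all \(y\) in a countable dense set, and both sides are continuous in \(y\) because \(\widetilde F_m\) is a finite convex function on \(\mathbb R^d\). Hence the inequality holds for all \(y\) on a single almost-sure event, which completes the argument.
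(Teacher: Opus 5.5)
Your proposal is correct and follows the paper's route. The paper conditions on the history together with the sampled subset, and notes that the selected component is active, so its conditional mean subgradient lies in $\partial F_S(x_t)$; averaging over the uniform subset via the tower property then places it in $\partial\widetilde F_m(x_t)$. Your countable-dense-set argument makes rigorous the averaging step that the paper states directly through the per-subset vectors $v_S(x_t)$, while the paper's appendix version additionally sharpens the second-moment bound to the selection-weighted $G_m^2\le G^2$.
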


The proof is in Appendix~\ref{app:surrogate-gradient}.  The appendix also
replaces the uniform scale \(G\) by a sharper selection-weighted scale
\(G_m\le G\), which averages component moment bounds according to their
probabilities of being selected by the sampled maximum.

\subsection{Approximation and Localized Complexity}
\label{sec:theory}

Write \(f_{(1)}(x)\ge\cdots\ge f_{(N)}(x)\) for the sorted component values and
define the average-top-\(k\) objective
\[
    \operatorname{AT}_k(x)\defeq\frac1k\sum_{j=1}^k f_{(j)}(x).
\]
For a region \(\mathcal R\subseteq\X\), let
\begin{align}
    \operatorname{gap}_k(\mathcal R)
    &\defeq\sup_{x\in\mathcal R}
      \bigl[F(x)-\operatorname{AT}_k(x)\bigr],
    &
    W(\mathcal R)
    &\defeq\sup_{x\in\mathcal R}
      \bigl[f_{(1)}(x)-f_{(N)}(x)\bigr].
    \label{eq:gap-range}
\end{align}
The quantity \(\mathrm{gap}_k\) measures how far the average of the largest \(k\) components can fall below the maximum, while \(W\) is the full component-value width.

\begin{proposition}
\label{prop:approximation}
For every \(x\in\X\) and \(k\in[N]\), the sampled surrogate is a lower
approximation to the maximum and satisfies
\begin{equation}
    0\le F(x)-\widetilde F_m(x)
    \le F(x)-\operatorname{AT}_k(x)
    +\bigl[f_{(1)}(x)-f_{(N)}(x)\bigr]\pi^{\rm miss}_{m,k},
    \qquad
    \pi^{\rm miss}_{m,k}\defeq
      \frac{\binom{N-k}{m}}{\binom{N}{m}}.
    \label{eq:pointwise-approximation}
\end{equation}
Here \(\binom{N-k}{m}=0\) when \(m>N-k\).  Moreover,
\(\pi^{\rm miss}_{m,k}\le\exp(-mk/N)\), and hence, for every
\(\mathcal R\subseteq\X\),
\begin{equation}
    \sup_{x\in\mathcal R}[F(x)-\widetilde F_m(x)]
    \le \operatorname{gap}_k(\mathcal R)+W(\mathcal R)e^{-mk/N}.
    \label{eq:regional-approximation}
\end{equation}
\end{proposition}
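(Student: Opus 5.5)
Fix \(x\in\X\) and \(k\in[N]\), and let \(T_k\subseteq[N]\) be the (tie-broken) index set of the \(k\) largest values \(f_{(1)}(x),\dots,f_{(k)}(x)\). The plan is a pointwise coupling argument on the event that the random subset \(S\) hits \(T_k\).

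The lower bound is immediate: \(F_S(x)\le F(x)\) for every subset \(S\), so \(\widetilde F_m(x)\le F(x)\).

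For the upper bound, split on the event \(E=\{S\cap T_k\neq\emptyset\}\).
\begin{itemize}
\item On \(E\), the sampled maximum is at least \(f_J(x)\), where \(J\) is, say, the smallest-rank element of \(S\cap T_k\). Rather than bounding \(F_S(x)\ge f_{(k)}(x)\), which is too weak, I would use the cruder but correct bound \(F_S(x)\ge\frac1{|S\cap T_k|}\sum_{i\in S\cap T_k}f_i(x)\).
\item On \(E^c\), use \(F_S(x)\ge f_{(N)}(x)\).
\end{itemize}
The key symmetry fact is that, conditional on \(E\) and on \(|S\cap T_k|=r\), the set \(S\cap T_k\) is uniform among \(r\)-subsets of \(T_k\). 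Its average therefore has conditional expectation exactly \(\operatorname{AT}_k(x)\), giving
\[
\mathbb E[F_S(x)\mid E]\ge\operatorname{AT}_k(x).
\]
Writing \(p=\pi^{\rm miss}_{m,k}=\Pr(E^c)\) (the count of \(m\)-subsets avoiding \(T_k\) divided by \(\binom Nm\)), we get
\[
\widetilde F_m(x)\ge(1-p)\operatorname{AT}_k(x)+p\,f_{(N)}(x).
\]
Hence
\[
F(x)-\widetilde F_m(x)\le F(x)-\operatorname{AT}_k(x)+p\bigl[\operatorname{AT}_k(x)-f_{(N)}(x)\bigr],
\]
and \(\operatorname{AT}_k(x)\le f_{(1)}(x)\) yields \eqref{eq:pointwise-approximation}. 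When \(m>N-k\), every subset meets \(T_k\), so \(p=0\), consistent with the stated convention.

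For the tail bound, expand
\[
\frac{\binom{N-k}{m}}{\binom Nm}=\prod_{j=0}^{m-1}\frac{N-k-j}{N-j}\le\Bigl(1-\frac kN\Bigr)^m\le e^{-mk/N},
\]
using that each factor \(1-k/(N-j)\le 1-k/N\) and \(1-u\le e^{-u}\). Finally, take suprema over \(x\in\mathcal R\) in the pointwise bound: bound each of the two terms by its own supremum, \(\operatorname{gap}_k(\mathcal R)\) and \(W(\mathcal R)\), with \(p\le e^{-mk/N}\), which gives \eqref{eq:regional-approximation}.

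The only delicate step is the exchangeability claim on \(E\), which requires that \(T_k\) be fixed given \(x\) (ties are broken deterministically) and that \(S\) be uniform. Then permutations of \(T_k\) preserve the law of \(S\), so each \(i\in T_k\) is equally likely to appear in \(S\cap T_k\) given its size, and the averaged value is exactly \(\operatorname{AT}_k(x)\) in conditional expectation.
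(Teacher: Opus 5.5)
Your proof is correct, but its central step takes a different route from the paper's. The paper tracks the best sampled rank \(J\), so that \(F(x)-\widetilde F_m(x)=\mathbb E[\Delta_J]\) with \(\Delta_j=f_{(1)}(x)-f_{(j)}(x)\). It computes \(\mathbb P(J=j)=\binom{N-j}{m-1}/\binom Nm\) for \(j\le k\) and notes that these weights are nonincreasing in \(j\). It then applies Chebyshev's sum inequality to the nondecreasing \(\Delta_j\) to get \(\mathbb E[\Delta_J\mid J\le k]\le F(x)-\operatorname{AT}_k(x)\).

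You instead drop the fact that \(F_S\) equals the \emph{best} hit value and lower-bound it by the average over \(S\cap T_k\). You then use only the exchangeability of \(S\cap T_k\) inside \(T_k\). Nothing is lost by this. Your bound \(F-\operatorname{AT}_k+p(\operatorname{AT}_k-f_{(N)})\) equals \((1-p)[F-\operatorname{AT}_k]+p[F-f_{(N)}]\). That is exactly what the paper's conditioning on \(J\le k\) versus \(J>k\) gives before both arguments relax to \eqref{eq:pointwise-approximation}.

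Your version is shorter and needs neither the explicit rank law nor a rearrangement inequality. It also carries over unchanged to any sampling scheme whose law is invariant under permutations of the top-\(k\) set, with \(p\) replaced by that scheme's miss probability. The paper's computation, in exchange, gives the exact distribution of the selected rank. This identifies \(\pi^{\rm miss}_{m,k}\) as \(\mathbb P(J>k)\) and leaves room for sharper rank-weighted estimates.

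The tail bound and the passage to suprema match the paper's. One small point: in the product, the factorwise comparison is only needed for \(m\le N-k\). Otherwise a zero factor makes the product vanish, and some factors are negative, so termwise multiplication of inequalities would not be justified anyway.
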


The proof is in Appendix~\ref{app:approximation-proof}.  The probability
\(\pi^{\rm miss}_{m,k}\) is exactly that of missing all top-\(k\) components.
Thus the approximation improves when many component values are close to the
maximum, even if \(m\ll N\).

Denote
\[
	F^\star\defeq\min_{x\in\X}F(x),
	\qquad
	\widetilde F_m^\star\defeq\min_{x\in\X}\widetilde F_m(x).
\]
The moment assumption also ensures that the minima are attained.  Indeed,
\(s_i(x)\defeq\mathbb E[g_i(x;\zeta)]\in\partial f_i(x)\) and Jensen's
inequality give \(\|s_i(x)\|_2\le G\).  Applying the subgradient inequality at
two points shows that every \(f_i\) is \(G\)-Lipschitz on \(\X\), and so are \(F\) and
\(\widetilde F_m\).  Since \(\X\) is compact,
both objectives attain their minima.

Crucially, the discussed approximation need not hold uniformly on all of \(\X\).  Let
\[
    \X_\rho\defeq\{x\in\X:F(x)\le F^\star+\rho\}.
\]
To state the localized transfer, define the localized approximation error
\begin{equation}
    \operatorname{err}_{m,k}(\rho)
    \defeq
    \operatorname{gap}_k(\X_\rho)
    +W(\X_\rho)\pi^{\rm miss}_{m,k},
    \label{eq:localized-error}
\end{equation}
and surrogate suboptimality 
\[
    \operatorname{subopt}_m(x)
    \defeq\widetilde F_m(x)-\widetilde F_m^\star.
\]

\begin{theorem}
\label{thm:localized-transfer}
Let \(\widehat x\) be any random point in \(\X\) satisfying
\(\mathbb E[\operatorname{subopt}_m(\widehat x)]\le\beta\).  If
\(\operatorname{err}_{m,k}(\rho)<\rho\), then
\begin{equation}
    \mathbb P\!\left(F(\widehat x)-F^\star\le\rho\right)
    \ge 1-\frac{\beta}{\rho-\operatorname{err}_{m,k}(\rho)}.
    \label{eq:localized-transfer-probability}
\end{equation}
Moreover,
\begin{equation}
    \mathbb E[F(\widehat x)-F^\star]
    \le \operatorname{err}_{m,k}(\rho)+\beta+GD_{\X}
    \min\!\left\{1,
      \frac{\beta}{\rho-\operatorname{err}_{m,k}(\rho)}\right\}.
    \label{eq:localized-transfer-expectation}
\end{equation}
\end{theorem}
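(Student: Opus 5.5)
The plan is to combine a deterministic ``localization'' inequality with Markov's inequality. Write \(e\defeq\operatorname{err}_{m,k}(\rho)\). The key claim is
\[
    x\in\X,\quad \operatorname{subopt}_m(x)<\rho-e
    \;\Longrightarrow\; x\in\X_\rho .
\]
With this in hand, the event \(\{F(\widehat x)-F^\star>\rho\}\) is contained in \(\{\operatorname{subopt}_m(\widehat x)\ge\rho-e\}\). Markov's inequality then bounds the probability of the latter by \(\beta/(\rho-e)\), which gives \eqref{eq:localized-transfer-probability}.

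To prove the claim I would first record two facts.

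\emph{A surrogate gap on the sublevel set.} Let \(x^\star\in\X_0\subseteq\X_\rho\) be a minimizer of \(F\). Proposition~\ref{prop:approximation}, applied on \(\X_\rho\), gives \(F(z)-\widetilde F_m(z)\le e\) for every \(z\in\X_\rho\). Combined with \(\widetilde F_m\le F\) everywhere, this yields
\[
    \widetilde F_m^\star\le \widetilde F_m(x^\star)\le F^\star .
\]
For \(z\in\X_\rho\) it also yields \(F(z)-F^\star\le \widetilde F_m(z)+e-\widetilde F_m^\star\), that is, \(F(z)-F^\star\le \operatorname{subopt}_m(z)+e\).

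\emph{A convexity (path) argument.} Suppose \(x\notin\X_\rho\). Consider the segment \(z_\theta=(1-\theta)x^\star+\theta x\). Since \(F\) is continuous (\(G\)-Lipschitz), there is a \(\theta\in(0,1)\) with \(F(z_\theta)=F^\star+\rho\), so \(z_\theta\in\X_\rho\). The first fact gives \(\rho\le\operatorname{subopt}_m(z_\theta)+e\). Convexity of \(\widetilde F_m\) gives
\[
    \operatorname{subopt}_m(z_\theta)\le(1-\theta)\operatorname{subopt}_m(x^\star)+\theta\,\operatorname{subopt}_m(x).
\]

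The main obstacle is the term \(\operatorname{subopt}_m(x^\star)\), which need not vanish. What I will use is \(\operatorname{subopt}_m(x^\star)\le F^\star-\widetilde F_m^\star\le e\). Plugging this in gives
\[
    \rho-e\le(1-\theta)e+\theta\,\operatorname{subopt}_m(x),
\]
which does not immediately yield \(\operatorname{subopt}_m(x)\ge\rho-e\). A cleaner route is to avoid \(x^\star\) and use directly that \(\operatorname{subopt}_m\) is convex with minimum zero, attained at some \(\tilde x^\star\). The plan is then:
\begin{enumerate}
    \item Show \(\tilde x^\star\in\X_\rho\) when \(e<\rho\). By the minimality of \(\widetilde F_m^\star\) and the bound \(F-\widetilde F_m\le e\), the sublevel set \(\{\operatorname{subopt}_m<\rho-e\}\) is convex, contains \(\tilde x^\star\), and on it points of \(\X_\rho\) satisfy \(F-F^\star<\rho\), strictly inside \(\X_\rho\).
    \item Apply a connectedness argument. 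The convex set \(\{\operatorname{subopt}_m<\rho-e\}\) meets \(\X_\rho\). It cannot contain a point on the relative boundary \(\{F=F^\star+\rho\}\), because there we would have \(\rho\le\operatorname{subopt}_m+e<\rho\). By continuity it therefore cannot leave \(\X_\rho\) at all.
    \item Handle \(\tilde x^\star\in\X_\rho\) itself, which is the delicate point. From \(F(\tilde x^\star)\le F^\star\) fails in general, so I would instead run the path argument from \(x^\star\) and show that the set \(\{\operatorname{subopt}_m<\rho-e\}\cap\X_\rho\) is nonempty and relatively clopen in the convex set \(\{\operatorname{subopt}_m<\rho-e\}\). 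If \(\operatorname{subopt}_m(x^\star)<\rho-e\), nonemptiness is immediate. Otherwise I would bound \(e\) more carefully, and I expect that patching this case is the real difficulty.
\end{enumerate}

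For \eqref{eq:localized-transfer-expectation}, split on the event \(A=\{\widehat x\in\X_\rho\}\):
\begin{itemize}
    \item On \(A\), the first fact gives \(F(\widehat x)-F^\star\le\operatorname{subopt}_m(\widehat x)+e\).
    \item On \(A^c\), use the Lipschitz bound \(F(\widehat x)-F^\star\le GD_\X\).
\end{itemize}
Taking expectations gives
\[
    \mathbb E[F(\widehat x)-F^\star]\le e+\beta+GD_\X\,\mathbb P(A^c),
\]
using \(\operatorname{subopt}_m\ge0\). The probability part bounds \(\mathbb P(A^c)\le\min\{1,\beta/(\rho-e)\}\), which completes \eqref{eq:localized-transfer-expectation}.
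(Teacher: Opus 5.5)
Your reduction of the probability bound to a localization claim plus Markov's inequality matches the paper, and so does your expectation bound. The gap is that the localization claim itself is never established. The connectedness detour does not close it: step~3 is left open, and step~1 is circular, because it presupposes that the surrogate minimizer lies in $\X_\rho$.

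Your first path argument failed through a bookkeeping loss, not a real obstruction. Write $s\defeq F^\star-\widetilde F_m^\star\ge0$. When you pass to ``$\rho\le\operatorname{subopt}_m(z_\theta)+e$'' you discard $s$. Since $z_\theta\in\X_\rho$ and $F(z_\theta)=F^\star+\rho$, you actually have $\widetilde F_m(z_\theta)\ge F^\star+\rho-e$, i.e.\ $\operatorname{subopt}_m(z_\theta)\ge\rho-e+s$. You then pay $s$ a second time through $\operatorname{subopt}_m(x^\star)\le s$. Your further bound $s\le e$ is also unjustified at that point, since it again needs the surrogate minimizer to lie in $\X_\rho$.

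The fix, which is the paper's argument, is to run the convexity step on $\widetilde F_m$ values rather than on $\operatorname{subopt}_m$. Using $\widetilde F_m(x^\star)\le F(x^\star)=F^\star$ and $\widetilde F_m^\star\le F^\star$,
\[
F^\star+\rho-e\le\widetilde F_m(z_\theta)\le(1-\theta)\widetilde F_m(x^\star)+\theta\bigl(\widetilde F_m^\star+\operatorname{subopt}_m(x)\bigr)\le F^\star+\theta\operatorname{subopt}_m(x).
\]
Hence $\operatorname{subopt}_m(x)\ge(\rho-e)/\theta>\rho-e$ for every $x\notin\X_\rho$. In your own notation this reads $\rho-e+s\le(1-\theta)s+\theta\operatorname{subopt}_m(x)$. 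This gives the barrier directly, with no clopen argument and no need to locate the surrogate minimizer. The rest of your proposal then goes through as written.
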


The proof is in Appendix~\ref{app:localization}.  Its key step is a sublevel
barrier.  If \(x\notin\X_\rho\), convexity along the segment from a minimizer
of \(F\) to \(x\), together with the surrogate bound on the boundary of
\(\X_\rho\), gives
\(\operatorname{subopt}_m(x)>\rho-\operatorname{err}_{m,k}(\rho)\).
Markov's inequality then yields~\eqref{eq:localized-transfer-probability}.
Thus the approximation need only hold where the target accuracy is relevant.

We next combine this transfer with stochastic subgradient convergence to
obtain a localized query-complexity bound.

\begin{theorem}
\label{thm:localized-complexity}
For a target \(\varepsilon>0\) and confidence parameter \(\delta\in(0,1)\),
suppose that some \(k\in[N]\) satisfies
\begin{equation}
    \operatorname{gap}_k(\X_\varepsilon)\le\frac{\varepsilon}{4}.
    \label{eq:localized-topk}
\end{equation}
Choose
\begin{equation}
    m=\min\!\left\{N-k+1,\;
        \max\!\left\{1,
        \left\lceil\frac Nk
        \log\!\left(1+\frac{4W(\X_\varepsilon)}{\varepsilon}\right)
        \right\rceil\right\}\right\}.
    \label{eq:subset-choice}
\end{equation}
Assume \(D_{\X}G>0\) and set
\begin{equation}
	T=
	\left\lceil\frac{4D_{\X}^2G^2}
	{\delta^2\varepsilon^2}\right\rceil,
	\label{eq:iteration-choice}
\end{equation}
and run Algorithm~\ref{alg:smax} with
\(\eta_t\equiv D_{\X}/(G\sqrt T)\).  Its output satisfies
\[
    \mathbb P\bigl(F(\bar x_T)-F^\star\le\varepsilon\bigr)\ge1-\delta.
\]
Consequently, at fixed confidence,
the numbers of stochastic-subgradient and component-value oracle calls used by
the method satisfy
\begin{align}
    \mathsf Q_{\rm grad}&=O(1+D_{\X}^2G^2/\varepsilon^2),
    \nonumber\\
    \mathsf Q_{\rm val}&=O\!\left(
       \left[1+\frac{D_{\X}^2G^2}{\varepsilon^2}\right]
       \min\!\left\{N-k+1,\;
       1+\frac Nk\log\!\left(1+\frac{W(\X_\varepsilon)}{\varepsilon}\right)
       \right\}\right).
    \label{eq:query-complexity}
\end{align}
\end{theorem}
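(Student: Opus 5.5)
We combine three earlier results. Lemma~\ref{lem:surrogate-gradient} gives the convergence of \(\bar x_T\) on the surrogate \(\widetilde F_m\). The choice~\eqref{eq:subset-choice} is meant to give \(\operatorname{err}_{m,k}(\varepsilon)\le\varepsilon/2\). Theorem~\ref{thm:localized-transfer} with \(\rho=\varepsilon\) then turns surrogate suboptimality into a probability bound on the original gap.

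\emph{Step 1 (surrogate convergence).} By Lemma~\ref{lem:surrogate-gradient}, the iterates are projected stochastic subgradient descent on the convex function \(\widetilde F_m\), with conditionally unbiased subgradients of second moment at most \(G^2\). The standard analysis proceeds as follows. Expand \(\|x_{t+1}-x^\star\|_2^2\), using nonexpansiveness of \(\Pi_{\X}\). Take conditional expectations given \(\mathcal H_t\), using that the mean of \(g_{I_t}\) lies in \(\partial\widetilde F_m(x_t)\). Sum, telescope, and apply Jensen to \(\bar x_T\). With \(\eta_t\equiv D_{\X}/(G\sqrt T)\) this gives
\[
\mathbb E[\operatorname{subopt}_m(\bar x_T)]\le \frac{D_{\X}^2}{2\eta T}+\frac{\eta G^2}{2}=\frac{D_{\X}G}{\sqrt T}\eqqcolon\beta .
\]
With \(T\) from~\eqref{eq:iteration-choice}, \(\beta\le \delta\varepsilon/2\). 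One technical point is that \(x_t\) is \(\mathcal H_t\)-measurable and \((S_t,\zeta_t)\) is fresh. The lemma is stated conditionally on \(\mathcal H_t\), so the tower property applies directly.

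\emph{Step 2 (localized error).} We bound \(\operatorname{err}_{m,k}(\varepsilon)=\operatorname{gap}_k(\X_\varepsilon)+W(\X_\varepsilon)\pi^{\rm miss}_{m,k}\). The first term is at most \(\varepsilon/4\) by~\eqref{eq:localized-topk}. For the second term there are two cases.

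\begin{itemize}
\item If \(m=N-k+1\), then \(m>N-k\), so \(\pi^{\rm miss}_{m,k}=0\).
\item Otherwise \(m\ge (N/k)\log(1+4W/\varepsilon)\). Proposition~\ref{prop:approximation} gives \(\pi^{\rm miss}_{m,k}\le e^{-mk/N}\le (1+4W/\varepsilon)^{-1}\). Hence \(W\pi^{\rm miss}_{m,k}\le W\varepsilon/(\varepsilon+4W)<\varepsilon/4\).
\end{itemize}

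The \(\max\{1,\cdot\}\) only increases \(m\), which is harmless since \(\pi^{\rm miss}_{m,k}\) is nonincreasing in \(m\). We also need \(m\le N\), which holds because \(N-k+1\le N\). So in both cases \(\operatorname{err}_{m,k}(\varepsilon)\le\varepsilon/2<\varepsilon\).

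\emph{Step 3 (transfer and counting).} Apply~\eqref{eq:localized-transfer-probability} with \(\rho=\varepsilon\) and \(\widehat x=\bar x_T\):
\[
\mathbb P(F(\bar x_T)-F^\star\le\varepsilon)\ge 1-\frac{\beta}{\varepsilon-\operatorname{err}_{m,k}(\varepsilon)}\ge 1-\frac{\delta\varepsilon/2}{\varepsilon/2}=1-\delta .
\]
For the query counts, \(\mathsf Q_{\rm grad}=T\le 1+4D_{\X}^2G^2/(\delta^2\varepsilon^2)\) and \(\mathsf Q_{\rm val}=mT\). At fixed \(\delta\), \(T=O(1+D_{\X}^2G^2/\varepsilon^2)\). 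For \(m\), use \(\lceil a\rceil\le 1+a\) and \(\log(1+4W/\varepsilon)\le 4\log(1+W/\varepsilon)\), which follows from concavity of \(u\mapsto\log(1+u)\). This gives \(m=O(\min\{N-k+1,\,1+(N/k)\log(1+W/\varepsilon)\})\), which is~\eqref{eq:query-complexity}.

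The main obstacle is Step~2's bookkeeping. We must make sure that \(m\) is a valid subset size and handle the degenerate case \(W(\X_\varepsilon)=0\), where the logarithm vanishes and \(m=1\) but the miss term is zero anyway. Step~1 is routine once Lemma~\ref{lem:surrogate-gradient} is in hand.
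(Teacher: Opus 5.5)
Your proposal is correct and follows essentially the paper's proof: you get the two-case bound \(\operatorname{err}_{m,k}(\varepsilon)\le\varepsilon/2\) from the subset choice, the projected stochastic subgradient bound \(\mathbb E[\operatorname{subopt}_m(\bar x_T)]\le D_{\X}G/\sqrt T\le\delta\varepsilon/2\), and then apply Theorem~\ref{thm:localized-transfer} with \(\rho=\varepsilon\). The differences are cosmetic: the paper proves the stochastic subgradient bound with the selection-weighted scale \(G_m\) and then uses \(G_m\le G\), whereas you work with \(G\) directly; and you make explicit the inequality \(\log(1+4W/\varepsilon)\le4\log(1+W/\varepsilon)\), which the paper leaves implicit when substituting \(m\) into the value-query count.
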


The proof is in Appendix~\ref{app:complexity}, which first establishes the
corresponding complexity guarantee with the selection-weighted scale \(G_m\)
in place of \(G\); the main-text result then follows from \(G_m\le G\).  If
\(D_{\X}G=0\), every point of \(\X\) is optimal and the conclusion is
immediate.  This is an
instance-dependent guarantee, not an a priori tuning rule: \(F^\star\),
\(\X_\varepsilon\), and the localized gap and width are generally unknown.
Analytic upper bounds can determine \(m\); our experiments instead tune it on
training-only development runs.  With geometry and moment scales fixed, if
\(k\) is of order \(N^\alpha\) for some \(\alpha\in(0,1]\), and
\(W(\X_\varepsilon)/\varepsilon\) is at most polynomial, then the
value-query dependence is \(\widetilde O(N^{1-\alpha})\), rather than the
\(O(N)\) cost of a full scan per iteration; when \(k=O(1)\), the bound
recovers \(O(N)\).

The factor \(N/k\) reflects a genuine information requirement in the separated
value/subgradient oracle model, rather than only the analysis of \SMax--SGM.

\begin{theorem}
\label{thm:value-lower-bound}
Let \(N\) be even, \(1\le k\le N/4\), and \(R,G>0\).  There is a family of
one-dimensional affine instances of~\eqref{eq:problem} on
\(\X=[-R,R]\) with deterministic component-subgradient oracles such that every
instance satisfies
\[
    \operatorname{gap}_k(\X)=0,
    \qquad
    W(\X)\le 6GR,
\]
and all component subgradients have norm \(G\).  For every
\(0<\varepsilon\le GR/2\), any possibly randomized algorithm that is
\(\varepsilon\)-accurate with probability at least \(2/3\) on every instance in
the family and uses at most \(\mathsf Q_{\rm val}\) component-value queries must
satisfy
\begin{equation}
    \mathsf Q_{\rm val}\ge \frac{N}{6k}.
    \label{eq:value-lower-bound}
\end{equation}
This holds regardless of the number of component-subgradient queries.
\end{theorem}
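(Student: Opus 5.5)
The plan is a hidden-group construction combined with Yao's minimax principle. Split the \(N\) components into \(N/2\) "left" and \(N/2\) "right" slopes. Among the \(N/(2k)\) natural blocks of size \(k\) (for simplicity, take \(k\mid N/2\); otherwise use \(\lfloor N/(2k)\rfloor\) blocks), one hidden block \(B\) is designated. On \(\X=[-R,R]\) define \(f_i(x)=Gx\) for \(i\in B\). Also fix a default top group \(A\) of \(k\) components with \(f_i(x)=-Gx+c\), so that the maximum is attained by \(\max\{Gx,-Gx+c\}\) when \(B\) is present. All remaining components get the same affine functions but shifted down by a constant, e.g. \(f_i(x)=\pm Gx-2GR\), with slopes split evenly.

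The design requirements are as follows.

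(i) Every instance has at least \(k\) components tied at the maximum for every \(x\). This forces \(\operatorname{gap}_k(\X)=0\). Concretely, let the maximum always be attained by a full block of \(k\) identical functions, on whichever side is larger.

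(ii) All slopes are \(\pm G\), so the subgradient oracle returns only \(\pm G\). This depends on the index alone, and one can make the slope pattern identical across instances (each index's slope sign is fixed in advance). Then subgradient queries carry no information.

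(iii) The widths satisfy \(W\le 6GR\). This follows from the intercepts being \(O(GR)\).

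(iv) The minimizer differs across instances by more than \(2\varepsilon/G\) in a way that requires locating \(B\). The simplest version has a "null" instance with no raised block, and instances indexed by \(B\). In the null instance the positive-slope top components sit at \(Gx-GR\), shifted down, so the minimizer is near \(x=R/2\). In instance \(B\), block \(B\) is raised to \(Gx\), moving the minimizer to \(x\approx 0\). The function-value difference at the wrong point is at least \(GR/2\ge\varepsilon\), so no point is \(\varepsilon\)-optimal for both the null instance and any \(B\)-instance. I would check this with explicit formulas, choosing \(c\) and the shifts so that both claims hold and the gap stays zero. The negative-slope top group is then a fixed block of \(k\) identical components \(-Gx\), and the positive-slope default top group is a fixed block at \(Gx-GR\).

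The information argument goes as follows. A value query \(f_i(x)\) differs between the null instance and instance \(B\) only when \(i\in B\). Subgradients are identical across instances. Consider the uniform prior over the \(M\approx N/(2k)-1\) candidate blocks, mixed with the null instance with probability \(1/2\). Take a deterministic algorithm making \(q\) value queries. Run it on the null instance; it queries at most \(q\) indices and hence touches at most \(q\) blocks. For instance \(B\), its transcript coincides with the null one unless it queries an index in \(B\). So for all but \(q\) of the \(M\) blocks, it outputs the same point as on the null instance, and it fails on either the null instance or all those untouched \(B\)-instances.

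Success probability at least \(2/3\) on every instance then forces the failure mass to be small. Under the prior, the error is at least \(\tfrac12\min\{1,(M-q)/M\}\). Requiring this to be at most \(1/3\) gives \(q\ge M/3\approx N/(6k)\). By Yao's principle, the bound transfers to randomized algorithms.

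The main obstacle is the bookkeeping in the explicit construction. It must simultaneously keep the top-\(k\) ties for every \(x\) in every instance (including the null one, where the raised block is absent), keep the slope labels index-fixed, and hit the exact constant \(N/(6k)\) together with \(W\le 6GR\). This is what dictates using \(N\) even, \(k\le N/4\), and the halving into sides. I would first fix the null instance's top groups and then tune the intercepts.
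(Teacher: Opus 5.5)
There is a genuine gap: the overall strategy is right and matches the paper's, but two concrete steps fail as written. The strategy in question is fixing each index's subgradient answer, letting value answers differ only on a hidden set, taking a union bound over the indices touched along a reference transcript, and averaging over the algorithm's randomness.

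\textbf{Separation fails for your explicit design.} With null objective $F_0(x)=\max\{-Gx,\,Gx-GR\}$ and raised objective $F_B(x)=G|x|$, one has $F_0(x)-F_0^\star=G|x-R/2|$ and $F_B(x)-F_B^\star=G|x|$. So $x=R/4$ is $GR/4$-accurate on every instance. For $\varepsilon\in[GR/4,GR/2]$, an algorithm that outputs $R/4$ with no queries at all succeeds on your whole family.

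Evaluating each objective only at the other instance's minimizer is not enough. You need the two $\varepsilon$-optimal sets to be disjoint, which for slope-$G$ objectives means minimizers more than $2\varepsilon/G$, hence more than $R$, apart. In your architecture, raising the hidden block by $h$ above the null's positive top group moves the minimizer by at most $h/(2G)$, whatever $c$ is. You therefore need $h>2GR$, not $h=GR$. The paper takes this to the extreme: $f_i(x)=\sigma_iGx+4GR\,\mathbf 1\{i\in S_B\}$ gives objectives $4GR\pm Gx$, whose $\varepsilon$-optimal sets lie in $\{x\le -R/2\}$ and $\{x\ge R/2\}$.

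\textbf{A fixed partition into blocks cannot give the stated constant.} Your count yields $q\ge M/3$ with $M=\lfloor N/(2k)\rfloor-1$, which is strictly below $N/(6k)$. For example, $N=8$, $k=1$ gives $M=3$ and only $q\ge 1$, whereas the theorem asserts $\mathsf Q_{\rm val}\ge 4/3$, i.e., at least two queries.

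The repair is the paper's device. Hide a uniformly random $k$-subset among the $N/2$ indices of one slope sign, and spend no indices on a positive default group. Then each queried index is special with probability exactly $2k/N$, and no divisibility is needed.

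With both repairs, your null-versus-raised route is a valid alternative to the paper's symmetric two-hidden-sign construction. Let $P_+,P_-$ be the positive- and negative-slope halves, and take:
\begin{itemize}
\item a fixed $k$-block of $P_-$ at $-Gx$;
\item the rest of $P_-$ at $-Gx-2GR$;
\item all of $P_+$ at $Gx-2GR$ in the null instance;
\item in the raised instances, a uniform random $k$-subset $S\subseteq P_+$ lifted to $Gx+2GR$.
\end{itemize}
Then $F_0=-Gx$ and $F_S=Gx+2GR$ on $[-R,R]$. Both have $\operatorname{gap}_k=0$ and width at most $6GR$, and their $\varepsilon$-optimal sets lie in $\{x\ge R/2\}$ and $\{x\le -R/2\}$ respectively. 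Your half-null prior then forces error at least $\tfrac12(1-2kq/N)$, hence exactly $q\ge N/(6k)$.

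The paper instead has no null instance. It couples both signed experiments to the fictitious all-nonspecial history and bounds their total variation by $2k\mathsf Q_{\rm val}/N$.
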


The proof is in Appendix~\ref{app:value-lower-bound}.  It uses instances with
the stronger global property \(F=\operatorname{AT}_k\), so it applies inside
the localized class of Theorem~\ref{thm:localized-complexity}.
Thus, the \(N/k\) dependence is unavoidable in general, although this lower
bound does not address the \(\varepsilon\)- or logarithmic dependence
in~\eqref{eq:query-complexity}.

\section{Fine Tensor-Grid Chebyshev Approximation}
\label{sec:chebyshev-grid}

The near-active condition has a simple interpretation for Chebyshev
(uniform-norm) approximation on a tensor grid.  For \(q\ge1\) and integers
\(n_\ell\ge2\), introduce the discretization
\[
    \mathcal T=\prod_{\ell=1}^q
      \left\{0,\frac1{n_\ell-1},\ldots,1\right\},
    \qquad N=\prod_{\ell=1}^q n_\ell,
\]
and let
\(\tau_{\boldsymbol i}\in\mathcal T\) be a grid node.  For coefficients
\(x\in\X\), let \(r_x(\tau)\) be the pointwise approximation error at
\(\tau\in [0,1]^q\), assumed convex in \(x\).  The finite-max components are
\(f_{\boldsymbol i}(x)=r_x(\tau_{\boldsymbol i})\), and hence
\(F(x)=\max_{\tau\in\mathcal T}r_x(\tau)\).  For example, if \(b\) is the
target function and \(\phi(\tau)\) is the vector of values of prescribed basis
functions at \(\tau\), then \(x\) contains their coefficients,
\(\langle\phi(\tau),x\rangle\) is the approximant, and
\[
    r_x(\tau)=|\langle\phi(\tau),x\rangle-b(\tau)|.
\]
Suppose there exists a constant \(L_\tau\ge0\) such that, uniformly over
\(x\in\X\) and for all \(\tau,\tau'\in[0,1]^q\),
\begin{equation}
    |r_x(\tau)-r_x(\tau')|
    \le L_\tau\|\tau-\tau'\|_\infty.
    \label{eq:chebyshev-components}
\end{equation}
For the linear approximation model above, the condition holds with
\(L_\tau=B_{\X}L_\phi+L_b\) when
\(\sup_{x\in\X}\|x\|_2\le B_{\X}\),
\(\|\phi(\tau)-\phi(\tau')\|_2\le
L_\phi\|\tau-\tau'\|_\infty\), and
\(|b(\tau)-b(\tau')|\le L_b\|\tau-\tau'\|_\infty\).

The resulting tensor-grid specialization is as follows.

\begin{corollary}
\label{cor:fine-grid}
Under~\eqref{eq:chebyshev-components}, let \(\varepsilon>0\) and define
\(A_\varepsilon\defeq 1+4L_\tau/\varepsilon\).  There exists
\(k_\varepsilon\in[N]\) such that
\[
    \operatorname{gap}_{k_\varepsilon}(\X)\le\frac{\varepsilon}{4},
    \qquad W(\X)\le L_\tau,
    \qquad \frac{N}{k_\varepsilon}\le A_\varepsilon^q.
\]
Consequently, Theorem~\ref{thm:localized-complexity} permits a subset size
\[
    m\le \min\!\left\{N,\,
      1+A_\varepsilon^q\log A_\varepsilon\right\}.
\]
If component subgradients have norm at most \(G_r\), then, at fixed
confidence, the value-query bound is
\begin{equation}
    O\!\left(
      \left[1+\frac{D_{\X}^2G_r^2}{\varepsilon^2}\right]
      \min\!\left\{N,\,1+A_\varepsilon^q\log A_\varepsilon\right\}
      \right),
    \label{eq:grid-complexity}
\end{equation}
while a full-grid subgradient method uses
\(O(N[1+D_{\X}^2G_r^2/\varepsilon^2])\) component values.
\end{corollary}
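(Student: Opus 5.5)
The plan is to build, for each \(x\in\X\), a set of at least \(k_\varepsilon\) grid nodes whose residuals lie within \(\varepsilon/4\) of \(F(x)\), where \(k_\varepsilon\) depends only on the grid and \(\varepsilon\), not on \(x\). The three claims and the complexity bound then follow by plugging into Theorem~\ref{thm:localized-complexity}.

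\textbf{Trivial case and the width bound.} If \(L_\tau=0\), every \(r_x\) is constant in \(\tau\), so \(F=\operatorname{AT}_N\). Then \(k_\varepsilon=N\) and \(A_\varepsilon=1\) work. Otherwise set \(h\defeq\varepsilon/(4L_\tau)\), so that \(A_\varepsilon=1+1/h\). The width bound is immediate: all nodes lie in \([0,1]^q\), so \(\|\tau-\tau'\|_\infty\le1\). By~\eqref{eq:chebyshev-components}, \(f_{(1)}(x)-f_{(N)}(x)\le L_\tau\) for every \(x\), hence \(W(\X)\le L_\tau\).

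\textbf{Choice of \(k_\varepsilon\) and the gap bound.} Fix \(x\) and let \(\tau^\ast\) be a grid maximizer of \(r_x\). Any node \(\tau\) with \(\|\tau-\tau^\ast\|_\infty\le h\) satisfies \(r_x(\tau)\ge F(x)-\varepsilon/4\), by~\eqref{eq:chebyshev-components}. Along coordinate \(\ell\), the grid has spacing \(1/(n_\ell-1)\). Even when \(\tau^\ast_\ell\) sits at the boundary \(0\) or \(1\), one may move to one side only and still find at least
\[
k_\ell\defeq\min\{n_\ell,\lfloor h(n_\ell-1)\rfloor+1\}
\]
grid values within distance \(h\) of \(\tau^\ast_\ell\), including \(\tau^\ast_\ell\) itself. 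Taking the product, at least \(k_\varepsilon\defeq\prod_\ell k_\ell\) nodes are \(\varepsilon/4\)-near-maximal, and this count is independent of \(x\). Hence the top \(k_\varepsilon\) sorted values all exceed \(F(x)-\varepsilon/4\). Therefore \(\operatorname{AT}_{k_\varepsilon}(x)\ge F(x)-\varepsilon/4\) for every \(x\), i.e.\ \(\operatorname{gap}_{k_\varepsilon}(\X)\le\varepsilon/4\).

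\textbf{Counting bound.} The step needing care is the boundary counting, which gives a per-axis bound \(n_\ell/k_\ell\le 1+1/h\). If \(k_\ell=n_\ell\), this is trivial. Otherwise \(k_\ell>h(n_\ell-1)\), so \(k_\ell/h>n_\ell-1\). Then
\[
k_\ell(1+1/h)=k_\ell+k_\ell/h>1+(n_\ell-1)=n_\ell,
\]
using \(k_\ell\ge1\). Multiplying over \(\ell\) gives \(N/k_\varepsilon\le A_\varepsilon^q\).

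\textbf{Consequences.} Since \(\X_\varepsilon\subseteq\X\), condition~\eqref{eq:localized-topk} holds with \(k=k_\varepsilon\), and \(W(\X_\varepsilon)\le L_\tau\). In~\eqref{eq:subset-choice}, \(\log(1+4W(\X_\varepsilon)/\varepsilon)\le\log A_\varepsilon\). Hence
\[
\Big\lceil \tfrac{N}{k_\varepsilon}\log(\cdot)\Big\rceil\le 1+A_\varepsilon^q\log A_\varepsilon,
\]
and \(N-k_\varepsilon+1\le N\), which yields the stated bound on \(m\). If component subgradients have norm at most \(G_r\), the moment assumption holds with \(G=G_r\). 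Substituting into~\eqref{eq:query-complexity}, with \(\min\{N-k+1,\cdot\}\le\min\{N,\cdot\}\) and the logarithm bounded as above, gives~\eqref{eq:grid-complexity}. For the comparison, projected subgradient descent on \(F\) needs \(O(1+D_{\X}^2G_r^2/\varepsilon^2)\) iterations, each scanning all \(N\) values.
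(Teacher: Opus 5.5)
Your proposal is correct and follows essentially the same route as the paper: your \(k_\ell=\min\{n_\ell,\lfloor h(n_\ell-1)\rfloor+1\}\) is exactly the paper's \(a_\ell+1\), and the remaining steps all match Proposition~\ref{prop:fine-grid-full} and the corollary's proof, namely the \(\ell_\infty\)-neighborhood counting for the gap, the diameter-one width bound, the per-axis inequality \(n_\ell/k_\ell\le A_\varepsilon\), and the substitution into Theorem~\ref{thm:localized-complexity}. The only cosmetic difference is that the paper first proves the general bound \(\operatorname{gap}_{k(a)}(\X)\le L_\tau d(a)\) for arbitrary index radii \(a\), together with a discretization-error estimate not needed here, before specializing to \(\varepsilon/4\).
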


Appendix~\ref{app:grid-details} gives the explicit construction and proof.  It
also controls the discretization error: for
\(H(x)=\sup_{\tau\in[0,1]^q}r_x(\tau)\),
\[
    0\le H(x)-F(x)\le
    \Delta_{\mathcal T}\defeq\frac{L_\tau}{2}
      \max_\ell\frac1{n_\ell-1}.
\]
Thus every \(\xi\)-suboptimal point for \(F\) is
\((\xi+\Delta_{\mathcal T})\)-suboptimal for \(H\).  Refining a fixed-dimensional tensor
grid therefore improves continuous-domain fidelity while creating more nearly
active discrete components.  The case \(q=2\) describes the grid-cardinality
scaling of the VFD experiment below after rescaling its rectangle to
\([0,1]^2\).

\FloatBarrier
\section{Numerical Experiments}
\label{sec:experiments}

We evaluate \SMax--SGM on two large finite-max instances.
The experimental code is available at
\url{https://github.com/egorgladin/smax-sgm}.
We use component-value queries as our primary efficiency metric,
while runtime illustrates additional computational costs, particularly
for Restricted SOCP. Runtime comparisons should be interpreted with
caution, as they depend on the implementation, computing environment,
and timing protocol.

\subsection{Variable Fractional-Delay Design}
\label{sec:vfd-experiment}

Variable fractional-delay (VFD) design arises in digital signal processing
when a realizable filter must approximate a delay that varies continuously by
a fraction of the sampling period.  A common implementation is the Farrow
structure, whose fixed subfilters are combined polynomially in an adjustable
delay parameter \(p\) \citep{ZhaoTay2023VFD}.  We adopt the
algebraic-polynomial VFD model and grid from that work, but replace its weighted
least-squares criterion by a minimax criterion to control the largest complex
response error jointly over frequency and delay:
\begin{equation}
    H_a(\omega,p)
    =\sum_{\nu=0}^{L_h}\sum_{s=0}^{r}
      a(\nu,s)p^s e^{-\mathrm{i}\omega(\nu-L_h/2)},
    \qquad
    D(\omega,p)=e^{-\mathrm{i}\omega p}.
    \label{eq:vfd-response}
\end{equation}
Here \(H_a\) is the implemented response and \(D\) is the ideal response;
\(L_h\) and \(r\) are the filter and polynomial orders.
On a tensor grid, each component is the complex-modulus error
\(f_{i\ell}(a)=|H_a(\omega_i,p_\ell)-D(\omega_i,p_\ell)|\), a convex norm of
an affine function.  We take \(L_h=60\), \(r=4\), 1,000 frequencies in
\([0,0.9\pi]\), and 200 delays in \([0,0.5]\).  After symmetry reduction the
problem has \(d=153\) real variables and \(N=200{,}000\) components.  A denser
\(2001\times401\) validation grid checks the final response.

Every method starts from the same tensor least-squares point.  We compare
\SMax--SGM with full-grid SGM, LSE continuation with L-BFGS-B, and
restricted-SOCP exchange.  The last method supplies a tight numerical reference
interval, reported in Appendix~\ref{app:vfd-protocol}; its lower endpoint
defines the relative training gap.  We omit the method of
\citet{CarmonEtAl2021}: its ball-optimization-oracle construction is
substantially more involved to implement, and the paper reports no experiments
to guide a faithful baseline.  Every method is assessed under the common
budget \(25N=5\times10^6\) component values.  Tunable parameters are selected
by the final training objective at this same horizon, without using the 5\%
crossing time.  \SMax\ uses \(m=16384\), selected on development seeds 0--2
and then frozen for 20 disjoint confirmatory seeds.  Validation values are not
used for tuning, stopping, or output selection, and all tuning cost is excluded.
Restricted SOCP is continued to numerical convergence only to obtain the
common numerical reference.

\FloatBarrier
\begin{table}[H]
    \caption{VFD optimizer-only cost to the first stored 5\% checkpoint and
    endpoint performance.  For \SMax, target costs are medians of seed-specific
    first crossings and endpoints are medians over seeds; ``Val.'' is the
    denser-grid maximum error.  Times are warmed.}
    \label{tab:vfd-results}
    \centering
    \footnotesize
    \setlength{\tabcolsep}{3.0pt}
    \begin{tabular}{@{}l*{4}{c}@{}}
        \toprule
        Method & Value queries to 5\% & Time to 5\% (s) & Final gap & Val. \\
        \midrule
        \SMax--SGM & \(2.61\!\times\!10^6\) & 0.545 & 1.82\% & \(2.75\!\times\!10^{-3}\) \\
        Full-grid SGM & -- & -- & 27.17\% & \(3.44\!\times\!10^{-3}\) \\
        LSE--L-BFGS & -- & -- & 44.99\% & \(3.92\!\times\!10^{-3}\) \\
        Restricted SOCP & \(3.40\!\times\!10^6\) & 39.3 & 3.12\% & \(2.79\!\times\!10^{-3}\) \\
        \bottomrule
    \end{tabular}
\end{table}

All 20 \SMax\ runs reach 5\%; the query and warmed-runtime IQRs are
2,211,840--2,605,056 and 0.479--0.596 seconds, respectively.  Relative to
SOCP, the median \SMax\ crossing uses 1.31 times fewer values and 72 times less
warmed optimizer time; full-grid SGM and LSE do not cross within \(25N\).
Timings exclude the common WLS initialization, tuning, scoring, and one-time
setup; the excluded \SMax\ compilation took 1.49 seconds.

\begin{figure}[H]
    \centering
    \includegraphics[width=\linewidth]
    {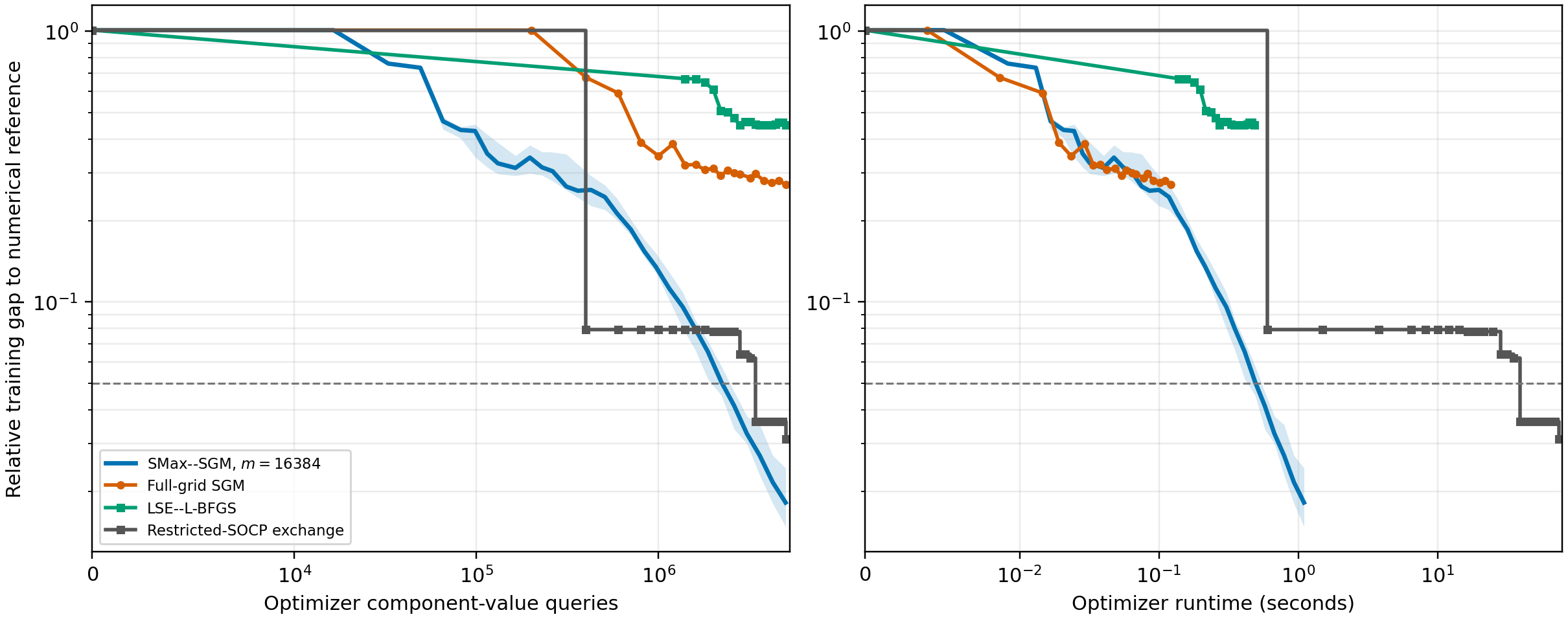}
    \caption{VFD relative training gap to the numerical SOCP reference versus
    post-WLS value queries (left) and warmed optimizer runtime
    (right).  \SMax\ shows the median and interquartile range over 20
    confirmatory seeds.  Curves are pointwise summaries; Table~\ref{tab:vfd-results}
    instead uses seed-specific first crossings.  The dashed line marks 5\%; axes
    exclude initialization, tuning, setup, and scoring.}
    \label{fig:vfd-results}
\end{figure}

\FloatBarrier
\subsection{Downstream-Logit Polynomial Calibration}
\label{sec:experiment}

Our second experiment is a data-dependent finite maximum arising when the
final post-addition ReLU module in the public CIFAR-100
\texttt{cifar100\_resnet20} checkpoint
from \url{https://github.com/chenyaofo/pytorch-cifar-models}
\citep{Krizhevsky2009CIFAR}, which implements the
residual architecture of \citet{HeEtAl2016ResNet}, is replaced by a degree-14
polynomial.  Polynomial activations are
useful in private inference because additions and multiplications are much
cheaper than general nonlinearities under homomorphic encryption
\citep{LeeEtAl2023ApproxCNN,TongEtAl2024SmartPAF}.  This experiment performs
calibration and evaluation in plaintext; it does not benchmark encrypted
execution.  Only global-average pooling and a linear classifier follow this
module, so every logit change is affine in
the 15 polynomial coefficients.  We minimize the largest centered logit
distortion over all 50,000 training images and 100 classes.  Centering removes
each image's common class-logit shift, which does not change its softmax
probabilities.  This yields \(N=5{,}000{,}000\) absolute affine components in only
15 coefficients.  The feasible box is anchored at a scalar near-minimax ReLU
polynomial and guarantees scalar approximation error at most 3.584 over the
training activation range; Appendix~\ref{app:experiment} gives the construction.

All iterative methods start from the same bounded weighted-least-squares fit.
We compare \SMax--SGM with full-grid SGM, log-sum-exp (LSE) continuation with
L-BFGS-B, and restricted LP exchange.  Each general method receives a budget
of \(25N\) component-value queries; LP exchange stops after two exact separation
scans.  All tunable configurations are selected by their endpoint training
maximum at this same \(25N\) horizon, without optimizing the 5\% crossing
time.  The \SMax\ subset size is \(m=8192\), selected on development seeds
0--2, then frozen and evaluated over 20 disjoint confirmatory seeds.  All
tuning cost is excluded, and no test metric is used for tuning, stopping, or
output selection.  We report the relative training gap to the numerical LP
reference, post-WLS value queries, warmed optimizer runtime, and held-out
performance.

\FloatBarrier
\begin{table}[!htb]
    \caption{Optimizer-only cost to the first stored 5\% checkpoint and
    endpoint performance.  For \SMax, target costs are medians of seed-specific
    first crossings and endpoints are medians over seeds.  Times are warmed; a
    dash means the method did not reach the target.}
    \label{tab:polyact-results}
    \centering
    \footnotesize
    \setlength{\tabcolsep}{3.0pt}
    \begin{tabular}{@{}l*{5}{c}@{}}
        \toprule
        Method & Value queries to 5\% & Time to 5\% (s) & Final gap & Test max & Acc. \\
        \midrule
        \SMax--SGM & \(2.65\!\times\!10^6\) & 0.875 & 1.30\% & 1.67 & 68.56\% \\
        Full-grid SGM & -- & -- & 8.25\% & 1.67 & 68.55\% \\
        LSE--L-BFGS-B & \(9.0\!\times\!10^7\) & 15.5 & 3.25\% & 1.64 & 68.60\% \\
        Restricted LP & \(1.0\!\times\!10^7\) & 1.47 & \(<10^{-4}\%\) & 1.69 & 68.55\% \\
        \bottomrule
    \end{tabular}
\end{table}

All 20 \SMax\ runs reach 5\%; the query and warmed-runtime IQRs are
2,015,232--3,766,272 and 0.664--1.268 seconds, respectively, with a median of
324 selected subgradients.  Relative to LP and LSE, the median crossing uses
3.77 and 33.9 times fewer values, with warmed-runtime speedups of 1.68 and
17.7.  Including the common streamed WLS fit gives about \(1.53N\) values;
full-grid SGM does not cross within \(25N\).  The original checkpoint has
68.83\% accuracy, and \SMax\ predictions agree with it on 97.21\% of test
images.

We also compute the pointwise effective near-active count
\(\widehat k_\varepsilon(x)=\max\{k:F_{\rm train}(x)-
\operatorname{AT}_k(x)\le\varepsilon/4\}\), with \(\varepsilon=0.05\ell\).
At the first 5\% crossing its median is 85.5 for VFD but 1 for polynomial
calibration; detailed stagewise results and the pointwise-versus-regional
qualification appear in Appendix~\ref{app:near-active-diagnostic}.

\begin{figure}[H]
    \centering
    \includegraphics[width=\linewidth]
    {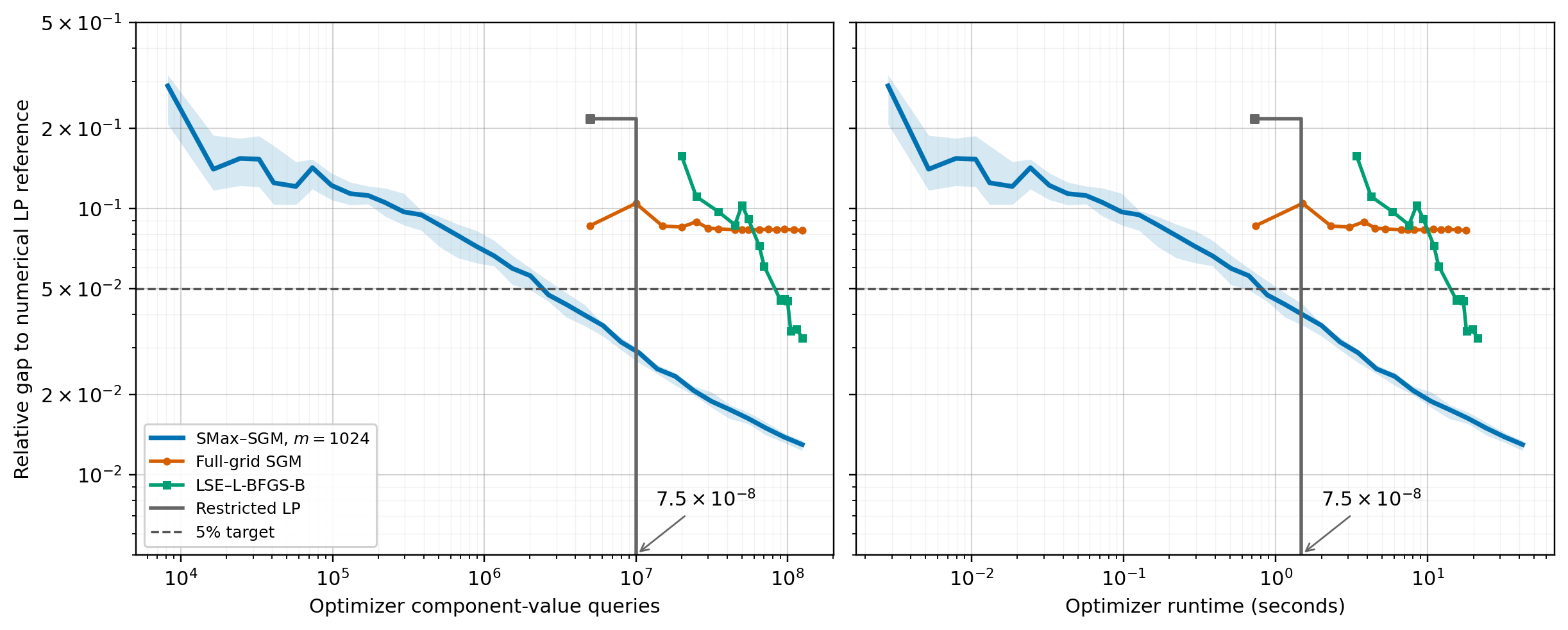}
    \caption{Relative training gap to the numerical LP reference versus
    post-WLS value queries (left) and warmed optimizer runtime
    (right).  \SMax\ shows the median and interquartile range over 20
    confirmatory seeds.  Curves are pointwise summaries; Table~\ref{tab:polyact-results}
    instead uses seed-specific first crossings.  The dashed line marks 5\%; axes
    exclude the common warm start, tuning, setup, and offline scoring.}
    \label{fig:polyact-results}
\end{figure}

\FloatBarrier
\section{Conclusion}
\label{sec:conclusion}

We analyzed \SMax--SGM as stochastic subgradient descent on a sampled-maximum
surrogate and derived a localized condition for sublinear value queries.  The
tensor-grid corollary explains why, at fixed accuracy, refining a
fixed-dimensional grid can eventually leave the required subset size
independent of the number of grid nodes.
The value-query lower bound shows, however, that without additional structure
the \(N/k\) dependence itself cannot be removed, even under perfect top-\(k\)
activity and with unrestricted subgradient queries, although it does not
address the \(\varepsilon\)- or logarithmic dependence of our upper bound.

In experiments with 200,000 and five million components, \SMax--SGM reached a
5\% numerical-reference gap with fewer
component-value queries and lower optimizer time than every comparison method
that reached the target under the reported protocols.  The gain remains
instance dependent and may disappear when few components are nearly maximal.
The pointwise \(\widehat k_\varepsilon\) measurements support this near-active
mechanism in VFD but remain small in polynomial calibration, emphasizing that
the theoretical condition is sufficient rather than necessary for an
empirical gain.
Together, the theory and experiments support sampled maxima as a simple way to
avoid repeated full scans when moderate accuracy is sufficient and near-active
components are sufficiently abundant.

\setlength{\bibsep}{0pt}
\bibliography{references}

@book{hardy1952inequalities,
  title={Inequalities},
  author={Hardy, Godfrey Harold and Littlewood, John Edensor and P{\'o}lya, George},
  edition={Second},
  year={1952},
  publisher={Cambridge University Press}
}

@article{BarrodalePhillips1975,
  author  = {Barrodale, Ian and Phillips, Curtis},
  title   = {Algorithm 495: Solution of an Overdetermined System of Linear Equations in the {Chebyshev} Norm},
  journal = {ACM Transactions on Mathematical Software},
  volume  = {1},
  number  = {3},
  pages   = {264--270},
  year    = {1975},
  doi     = {10.1145/355644.355651}
}

@article{GoulartChen2026Clarabel,
  author  = {Goulart, Paul J. and Chen, Yuwen},
  title   = {Clarabel: An Interior-Point Solver for Conic Programs with Quadratic Objectives},
  journal = {Mathematical Programming Computation},
  year    = {2026},
  doi     = {10.1007/s12532-026-00320-7}
}

@inproceedings{HeEtAl2016ResNet,
  author    = {He, Kaiming and Zhang, Xiangyu and Ren, Shaoqing and Sun, Jian},
  title     = {Deep Residual Learning for Image Recognition},
  booktitle = {Proceedings of the IEEE Conference on Computer Vision and Pattern Recognition},
  pages     = {770--778},
  year      = {2016},
  doi       = {10.1109/CVPR.2016.90}
}

@techreport{Krizhevsky2009CIFAR,
  author      = {Krizhevsky, Alex},
  title       = {Learning Multiple Layers of Features from Tiny Images},
  institution = {University of Toronto},
  year        = {2009}
}

@article{LeeEtAl2023ApproxCNN,
  author  = {Lee, Junghyun and Lee, Eunsang and Lee, Joon-Woo and Kim, Yongjune and Kim, Young-Sik and No, Jong-Seon},
  title   = {Precise Approximation of Convolutional Neural Networks for Homomorphically Encrypted Data},
  journal = {IEEE Access},
  volume  = {11},
  pages   = {62062--62076},
  year    = {2023},
  doi     = {10.1109/ACCESS.2023.3287564}
}

@article{Nesterov2005Smoothing,
  author  = {Nesterov, Yurii},
  title   = {Smooth Minimization of Non-Smooth Functions},
  journal = {Mathematical Programming},
  volume  = {103},
  number  = {1},
  pages   = {127--152},
  year    = {2005},
  doi     = {10.1007/s10107-004-0552-5}
}

@article{ByrdEtAl1995LBFGSB,
  author  = {Byrd, Richard H. and Lu, Peihuang and Nocedal, Jorge and Zhu, Ciyou},
  title   = {A Limited Memory Algorithm for Bound Constrained Optimization},
  journal = {SIAM Journal on Scientific Computing},
  volume  = {16},
  number  = {5},
  pages   = {1190--1208},
  year    = {1995},
  doi     = {10.1137/0916069}
}

@inproceedings{TongEtAl2024SmartPAF,
  author    = {Tong, Jianming and Dang, Jingtian and Golder, Anupam and Raychowdhury, Arijit and Hao, Cong and Krishna, Tushar},
  title     = {Accurate Low-Degree Polynomial Approximation of Non-Polynomial Operators for Fast Private Inference in Homomorphic Encryption},
  booktitle = {Proceedings of Machine Learning and Systems},
  volume    = {6},
  pages     = {210--223},
  year      = {2024}
}

@article{ZhaoTay2023VFD,
  author  = {Zhao, Ruijie and Tay, David B.},
  title   = {A Complex Exponential Structure for Low-Complexity Variable Fractional Delay {FIR} Filters},
  journal = {Circuits, Systems, and Signal Processing},
  volume  = {42},
  number  = {2},
  pages   = {1105--1141},
  year    = {2023},
  doi     = {10.1007/s00034-022-02169-2}
}

@article{Rabiner1972FIRLP,
  author  = {Rabiner, Lawrence R.},
  title   = {The Design of Finite Impulse Response Digital Filters Using Linear Programming Techniques},
  journal = {Bell System Technical Journal},
  volume  = {51},
  number  = {6},
  pages   = {1177--1198},
  year    = {1972},
  doi     = {10.1002/j.1538-7305.1972.tb02649.x}
}

@article{McClellanParksRabiner1973,
  author  = {McClellan, James H. and Parks, Thomas W. and Rabiner, Lawrence R.},
  title   = {A Computer Program for Designing Optimum {FIR} Linear Phase Digital Filters},
  journal = {IEEE Transactions on Audio and Electroacoustics},
  volume  = {21},
  number  = {6},
  pages   = {506--526},
  year    = {1973},
  doi     = {10.1109/TAU.1973.1162525}
}

@article{BertsimasEtAl2011Robust,
  author  = {Bertsimas, Dimitris and Brown, David B. and Caramanis, Constantine},
  title   = {Theory and Applications of Robust Optimization},
  journal = {SIAM Review},
  volume  = {53},
  number  = {3},
  pages   = {464--501},
  year    = {2011},
  doi     = {10.1137/080734510}
}

@inproceedings{ShalevShwartzWexler2016,
  author    = {Shalev-Shwartz, Shai and Wexler, Yonatan},
  title     = {Minimizing the Maximal Loss: How and Why},
  booktitle = {Proceedings of the 33rd International Conference on Machine Learning},
  series    = {Proceedings of Machine Learning Research},
  volume    = {48},
  pages     = {793--801},
  year      = {2016}
}

@inproceedings{SagawaEtAl2020GroupDRO,
  author    = {Sagawa, Shiori and Koh, Pang Wei and Hashimoto, Tatsunori B. and Liang, Percy},
  title     = {Distributionally Robust Neural Networks for Group Shifts: On the Importance of Regularization for Worst-Case Generalization},
  booktitle = {International Conference on Learning Representations},
  year      = {2020}
}

@inproceedings{YuEtAl2024GroupDRO,
  author    = {Yu, Dingzhi and Cai, Yunuo and Jiang, Wei and Zhang, Lijun},
  title     = {Efficient Algorithms for Empirical Group Distributionally Robust Optimization and Beyond},
  booktitle = {Proceedings of the 41st International Conference on Machine Learning},
  series    = {Proceedings of Machine Learning Research},
  volume    = {235},
  pages     = {57384--57414},
  year      = {2024}
}

@inproceedings{GiladBachrachEtAl2016CryptoNets,
  author    = {Gilad-Bachrach, Ran and Dowlin, Nathan and Laine, Kim and Lauter, Kristin and Naehrig, Michael and Wernsing, John},
  title     = {{CryptoNets}: Applying Neural Networks to Encrypted Data with High Throughput and Accuracy},
  booktitle = {Proceedings of the 33rd International Conference on Machine Learning},
  series    = {Proceedings of Machine Learning Research},
  volume    = {48},
  pages     = {201--210},
  year      = {2016}
}

@inproceedings{CarmonEtAl2021,
  author    = {Carmon, Yair and Jambulapati, Arun and Jin, Yujia and Sidford, Aaron},
  title     = {Thinking Inside the Ball: Near-Optimal Minimization of the Maximal Loss},
  booktitle = {Proceedings of the 34th Conference on Learning Theory},
  series    = {Proceedings of Machine Learning Research},
  volume    = {134},
  pages     = {866--882},
  year      = {2021}
}

@article{PolakRoysetWomersley2003,
  author  = {Polak, Elijah and Royset, Johannes O. and Womersley, Robert S.},
  title   = {Algorithms with Adaptive Smoothing for Finite Minimax Problems},
  journal = {Journal of Optimization Theory and Applications},
  volume  = {119},
  number  = {3},
  pages   = {459--484},
  year    = {2003},
  doi     = {10.1023/B:JOTA.0000006685.60019.3e}
}

@article{PeeRoyset2011,
  author  = {Pee, Eng Yau and Royset, Johannes O.},
  title   = {On Solving Large-Scale Finite Minimax Problems Using Exponential Smoothing},
  journal = {Journal of Optimization Theory and Applications},
  volume  = {148},
  number  = {2},
  pages   = {390--421},
  year    = {2011},
  doi     = {10.1007/s10957-010-9759-1}
}

@article{BeckTeboulle2012,
  author  = {Beck, Amir and Teboulle, Marc},
  title   = {Smoothing and First Order Methods: A Unified Framework},
  journal = {SIAM Journal on Optimization},
  volume  = {22},
  number  = {2},
  pages   = {557--580},
  year    = {2012},
  doi     = {10.1137/100818327}
}

@article{RasTamUteda2025,
  author  = {Ras, Charl J. and Tam, Matthew K. and Uteda, Daniel J.},
  title   = {Identification of Active Component Functions in Finite-Max Minimisation via a Smooth Reformulation},
  journal = {Applied Mathematics \& Optimization},
  volume  = {91},
  number  = {2},
  pages   = {36},
  year    = {2025},
  doi     = {10.1007/s00245-025-10229-7}
}

@article{Nemirovski2004MirrorProx,
  author  = {Nemirovski, Arkadi},
  title   = {Prox-Method with Rate of Convergence {$O(1/t)$} for Variational Inequalities with {Lipschitz} Continuous Monotone Operators and Smooth Convex--Concave Saddle Point Problems},
  journal = {SIAM Journal on Optimization},
  volume  = {15},
  number  = {1},
  pages   = {229--251},
  year    = {2004},
  doi     = {10.1137/S1052623403425629}
}

@article{NemirovskiEtAl2009SA,
  author  = {Nemirovski, Arkadi and Juditsky, Anatoli and Lan, Guanghui and Shapiro, Alexander},
  title   = {Robust Stochastic Approximation Approach to Stochastic Programming},
  journal = {SIAM Journal on Optimization},
  volume  = {19},
  number  = {4},
  pages   = {1574--1609},
  year    = {2009},
  doi     = {10.1137/070704277}
}

@article{HamedaniAybat2021,
  author  = {Hamedani, Erfan Yazdandoost and Aybat, Necdet Serhat},
  title   = {A Primal-Dual Algorithm with Line Search for General Convex--Concave Saddle Point Problems},
  journal = {SIAM Journal on Optimization},
  volume  = {31},
  number  = {2},
  pages   = {1299--1329},
  year    = {2021},
  doi     = {10.1137/18M1213488}
}

@article{ZhuLiuTranDinh2022,
  author  = {Zhu, Yuzixuan and Liu, Deyi and Tran-Dinh, Quoc},
  title   = {New Primal-Dual Algorithms for a Class of Nonsmooth and Nonlinear Convex--Concave Minimax Problems},
  journal = {SIAM Journal on Optimization},
  volume  = {32},
  number  = {4},
  pages   = {2580--2611},
  year    = {2022},
  doi     = {10.1137/21M1408683}
}

@inproceedings{AlacaogluMalitsky2022,
  author    = {Alacaoglu, Ahmet and Malitsky, Yura},
  title     = {Stochastic Variance Reduction for Variational Inequality Methods},
  booktitle = {Proceedings of Thirty Fifth Conference on Learning Theory},
  series    = {Proceedings of Machine Learning Research},
  volume    = {178},
  pages     = {778--816},
  year      = {2022}
}

@misc{GladinEtAl2025LSE,
  author  = {Gladin, Egor and Kroshnin, Alexey and Zhu, Jia-Jie and Dvurechensky, Pavel},
  title   = {Improved Stochastic Optimization of {LogSumExp}},
  year    = {2025},
  doi     = {10.48550/arXiv.2509.24894}
}

@article{SamakhoanaGrimmer2025,
  author  = {Samakhoana, Thabo and Grimmer, Benjamin},
  title   = {An Elementary Proof of the Near Optimality of {LogSumExp} Smoothing},
  journal = {arXiv preprint arXiv:2512.10825},
  year    = {2025},
  doi     = {10.48550/arXiv.2512.10825}
}

@article{GaudiosoEtAl2006,
  author  = {Gaudioso, Manlio and Giallombardo, Giovanni and Miglionico, Giovanna},
  title   = {An Incremental Method for Solving Convex Finite Min-Max Problems},
  journal = {Mathematics of Operations Research},
  volume  = {31},
  number  = {1},
  pages   = {173--187},
  year    = {2006},
  doi     = {10.1287/moor.1050.0175}
}

@article{ZhouTits1996,
  author  = {Zhou, Jian L. and Tits, Andr{\'e} L.},
  title   = {An {SQP} Algorithm for Finely Discretized Continuous Minimax Problems and Other Minimax Problems with Many Objective Functions},
  journal = {SIAM Journal on Optimization},
  volume  = {6},
  number  = {2},
  pages   = {461--487},
  year    = {1996},
  doi     = {10.1137/0806025}
}

@article{ZhangEtAl2010Exchange,
  author  = {Zhang, Liping and Wu, Soon-Yi and L{\'o}pez, Marco A.},
  title   = {A New Exchange Method for Convex Semi-Infinite Programming},
  journal = {SIAM Journal on Optimization},
  volume  = {20},
  number  = {6},
  pages   = {2959--2977},
  year    = {2010},
  doi     = {10.1137/090767133}
}

@inproceedings{KawaguchiLu2020,
  author    = {Kawaguchi, Kenji and Lu, Haihao},
  title     = {Ordered {SGD}: A New Stochastic Optimization Framework for Empirical Risk Minimization},
  booktitle = {Proceedings of the 23rd International Conference on Artificial Intelligence and Statistics},
  series    = {Proceedings of Machine Learning Research},
  volume    = {108},
  pages     = {669--679},
  year      = {2020}
}

@inproceedings{FanEtAl2017TopK,
  author    = {Fan, Yanbo and Lyu, Siwei and Ying, Yiming and Hu, Bao-Gang},
  title     = {Learning with Average Top-$k$ Loss},
  booktitle = {Advances in Neural Information Processing Systems},
  volume    = {30},
  pages     = {497--505},
  year      = {2017},
  publisher = {Curran Associates, Inc.}
}
\bibliographystyle{plainnat}

\clearpage
\appendix
\section{Oracle Model and Surrogate Subgradients}
\label{app:oracle}

For every component, let \(\sigma_i:\X\to[0,\infty)\) be a deterministic
measurable function.  Whenever \(x\in\X\) and \(i\in[N]\) are measurable with
respect to a sigma-field \(\mathcal H\) before a fresh oracle draw, assume
\begin{equation}
    \mathbb E[g_i(x;\zeta)\mid\mathcal H]\in\partial f_i(x),
    \qquad
    \mathbb E[\|g_i(x;\zeta)\|_2^2\mid\mathcal H]\le\sigma_i^2(x)
    \quad\text{a.s.}
    \label{eq:app-oracle}
\end{equation}
Thus \(\sigma_i^2(x)\) is a componentwise conditional second-moment bound.
The component-value oracle returns \(f_i(x)\) exactly.  Define
\begin{equation}
    G_{\rm rms}^2
    \defeq
    \sup_{x\in\X}\frac1N\sum_{i=1}^N\sigma_i^2(x),
    \qquad
    G^2
    \defeq
    \sup_{x\in\X}\max_i\sigma_i^2(x).
    \label{eq:app-moment-scales}
\end{equation}
Thus \(G\) is the uniform moment bound used in the main text.  The RMS
condition \(G_{\rm rms}<\infty\) also implies \(G^2\le NG_{\rm rms}^2\).

\subsection{Surrogate subgradient and selection-weighted moment bound}
\label{app:surrogate-gradient}

Let \(I(S,x)\) be the deterministically tie-broken maximizer of the sampled
components and put
\begin{equation}
    \pi_{m,i}(x)\defeq\mathbb P_S(I(S,x)=i),
    \qquad
    G_m^2\defeq
    \sup_{x\in\X}\sum_{i=1}^N\pi_{m,i}(x)\sigma_i^2(x).
    \label{eq:app-selected-moment}
\end{equation}

\begin{lemma}
\label{lem:app-surrogate-gradient}
For Algorithm~\ref{alg:smax},
\[
    \mathbb E[g_{I_t}(x_t;\zeta_t)\mid\mathcal H_t]
    \in\partial\widetilde F_m(x_t),
    \qquad
    \mathbb E[\|g_{I_t}(x_t;\zeta_t)\|_2^2\mid\mathcal H_t]
    \le G_m^2,
\]
where \(\mathcal H_t\) is the history before sampling \(S_t\).  Moreover,
\begin{equation}
    G_m^2\le\min\{G^2,\,mG_{\rm rms}^2\}.
    \label{eq:app-moment-comparison}
\end{equation}
\end{lemma}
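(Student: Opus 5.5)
Three things must be shown: the conditional mean of the selected subgradient lies in \(\partial\widetilde F_m(x_t)\), its conditional second moment is at most \(G_m^2\), and \(G_m^2\le\min\{G^2,mG_{\rm rms}^2\}\). The plan is to condition on \(\mathcal H_t\), which fixes \(x_t\), and to use that \(S_t\) and \(\zeta_t\) are fresh draws.

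\textbf{Subgradient property.} By the tower property over \(S_t\),
\[
\mathbb E[g_{I_t}(x_t;\zeta_t)\mid\mathcal H_t]=\sum_S \binom{N}{m}^{-1} s_{I(S,x_t)}(x_t),
\]
where \(s_i(x)\defeq\mathbb E[g_i(x;\zeta)\mid\mathcal H]\in\partial f_i(x)\). This uses \eqref{eq:app-oracle} with \(\mathcal H=\sigma(\mathcal H_t,S_t)\), which is legitimate because \((x_t,I_t)\) is measurable with respect to that sigma-field.

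For each fixed \(S\), the index \(I(S,x)\) is active in \(F_S=\max_{i\in S}f_i\). The subgradient inequality \(f_I(y)\ge f_I(x)+\langle s_I,y-x\rangle\), combined with \(F_S(y)\ge f_I(y)\) and \(f_I(x)=F_S(x)\), therefore gives \(s_{I(S,x)}(x)\in\partial F_S(x)\). Averaging these inequalities over \(S\) shows that the average vector is a subgradient of \(\widetilde F_m=\mathbb E_S F_S\) at \(x_t\). I would argue this directly from the inequalities, not via the sum rule for subdifferentials.

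\textbf{Second moment.} Again by the tower property,
\[
\mathbb E[\|g_{I_t}\|^2\mid\mathcal H_t]=\mathbb E\bigl[\mathbb E[\|g_{I_t}(x_t;\zeta_t)\|^2\mid\mathcal H_t,S_t]\,\big|\,\mathcal H_t\bigr]\le \mathbb E[\sigma_{I_t}^2(x_t)\mid\mathcal H_t].
\]
The right-hand side equals \(\sum_i\pi_{m,i}(x_t)\sigma_i^2(x_t)\le G_m^2\), since the distribution of \(I_t\) given \(\mathcal H_t\) is \(\pi_{m,\cdot}(x_t)\).

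\textbf{Comparison \eqref{eq:app-moment-comparison}.}
\begin{itemize}
\item The bound \(G_m^2\le G^2\) holds because \(\pi_{m,\cdot}(x)\) is a probability vector.
\item For \(G_m^2\le mG_{\rm rms}^2\), note \(\pi_{m,i}(x)\le\mathbb P(i\in S)=m/N\). Hence
\[
\sum_i\pi_{m,i}(x)\sigma_i^2(x)\le \frac{m}{N}\sum_i\sigma_i^2(x)\le mG_{\rm rms}^2,
\]
and taking the supremum over \(x\) gives the claim.
\end{itemize}

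The main obstacle is the measure-theoretic step: the oracle assumption must be applied at the random index \(I_t\), which is measurable with respect to \((\mathcal H_t,S_t)\) but not \(\mathcal H_t\). This is handled by choosing the enlarged sigma-field \(\mathcal H=\sigma(\mathcal H_t,S_t)\) in \eqref{eq:app-oracle}. The main-text Lemma~\ref{lem:surrogate-gradient} then follows from \(G_m\le G\).
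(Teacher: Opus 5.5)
Your proposal is correct and follows the paper's proof essentially step for step. The shared steps are: the tower property with the oracle assumption applied on the enlarged sigma-field $\sigma(\mathcal H_t,S_t)$; the fact that $I(S,x_t)$ is active in $F_S$; averaging over uniform $S$; identifying the conditional law of $I_t$ with $\pi_{m,\cdot}(x_t)$ for the second moment; and the two bounds $\pi_{m,i}(x)\le m/N$ and $\sum_i\pi_{m,i}(x)=1$ for the comparison. The only difference is cosmetic. You check $\partial f_{I(S,x)}(x)\subseteq\partial F_S(x)$ and the averaging into $\partial\widetilde F_m(x_t)$ directly from subgradient inequalities, whereas the paper cites the finite-max subdifferential rule and leaves the averaging implicit.
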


\begin{proof}
Note that \(I_t=I(S_t,x_t)\), and define for each size-\(m\) subset \(S\)
\[
    v_S(x_t)
    \defeq
    \mathbb E\!\left[
        g_{I(S,x_t)}(x_t;\zeta_t)
        \mid \mathcal H_t,S_t=S
    \right].
\]
The selected component \(I(S,x_t)\) is active in \(F_S(x_t)\).  Therefore
\eqref{eq:app-oracle} and the subdifferential rule for a finite maximum give
\[
    v_S(x_t)\in\partial f_{I(S,x_t)}(x_t)
    \subseteq\partial F_S(x_t).
\]
The tower property and the \eqref{eq:smax-surrogate} yield
\[
	\mathbb E[g_{I_t}(x_t;\zeta_t)\mid\mathcal H_t]
	=\mathbb E\!\left[
	\mathbb E[g_{I(S_t,x_t)}(x_t;\zeta_t)
	\mid\mathcal H_t,S_t]
	\mathrel{\big|}\mathcal H_t
	\right]
	=\mathbb E\!\left[
	v_S(x_t)
	\mid\mathcal H_t
	\right] \in \partial\widetilde F_m(x_t).
\]

For the second moment, another application of the tower property gives
\begin{align*}
    \mathbb E[\|g_{I_t}(x_t;\zeta_t)\|_2^2\mid\mathcal H_t]
    &=\mathbb E\!\left[
        \mathbb E[\|g_{I(S_t,x_t)}(x_t;\zeta_t)\|_2^2
                  \mid\mathcal H_t,S_t]
        \mathrel{\big|}\mathcal H_t
      \right] \\
    &\stackrel{\eqref{eq:app-oracle}}{\le}
      \mathbb E[\sigma_{I(S_t,x_t)}^2(x_t)\mid\mathcal H_t] \\
    &=\sum_{i=1}^N
      \mathbb P(I(S_t,x_t)=i\mid\mathcal H_t)\sigma_i^2(x_t) \\
    &\stackrel{\eqref{eq:app-selected-moment}}{=}\sum_{i=1}^N\pi_{m,i}(x_t)\sigma_i^2(x_t)
      \le G_m^2.
\end{align*}
Finally, \(\pi_{m,i}(x)\le\mathbb P(i\in S)=m/N\) and
\(\sum_i\pi_{m,i}(x)=1\).  Applying these two bounds separately inside
\eqref{eq:app-selected-moment} proves \eqref{eq:app-moment-comparison}.
\end{proof}

In particular, \(G_m\le G\), so this lemma proves
Lemma~\ref{lem:surrogate-gradient}.

\section{Sampled-Surrogate Approximation}
\label{app:approximation-proof}

\begin{proof}[Proof of Proposition~\ref{prop:approximation}]
Fix \(x\) and write
\(\Delta_j=f_{(1)}(x)-f_{(j)}(x)\), so
\(0=\Delta_1\le\cdots\le\Delta_N\).  Let \(J\) be the best rank sampled by
\(S\); then
\[
    F(x)-\widetilde F_m(x)=\mathbb E[\Delta_J].
\]
For \(j\le k\), the event \(J=j\) requires rank \(j\), excludes the better
ranks, and chooses the remaining \(m-1\) elements below rank \(j\).  Thus
\[
    \mathbb P(J=j)=\frac{\binom{N-j}{m-1}}{\binom Nm},
    \qquad
    \mathbb P(J>k)=\pi^{\rm miss}_{m,k}.
\]
Conditional on \(J\le k\), the probability assigned to rank \(j\) is proportional to
\(\binom{N-j}{m-1}\). These weights are nonincreasing in \(j\), so the conditional distribution puts at least as much weight on the smaller ranks as the uniform distribution on \(\{1,\ldots,k\}\). Because \(\Delta_j\) is nondecreasing, Chebyshev’s inequality for oppositely ordered sequences (see, e.g., \citep[Section 2.17]{hardy1952inequalities}) yields
\[
    \mathbb E[\Delta_J\mid J\le k]
    \le\frac1k\sum_{j=1}^k\Delta_j
    =F(x)-\operatorname{AT}_k(x).
\]
On \(J>k\), the loss is at most \(f_{(1)}(x)-f_{(N)}(x)\), proving
\eqref{eq:pointwise-approximation}.  Finally,
\[
    \pi^{\rm miss}_{m,k}
    =\prod_{r=0}^{m-1}\frac{N-k-r}{N-r}
    \le\left(1-\frac kN\right)^m
    \le e^{-mk/N},
\]
which gives \eqref{eq:regional-approximation}.
\end{proof}

\section{Localization to a True-Objective Sublevel Set}
\label{app:localization}

\begin{lemma}
\label{lem:app-barrier}
If \(\operatorname{err}_{m,k}(\rho)<\rho\), then
\begin{equation}
    x\notin\X_\rho
    \quad\Longrightarrow\quad
    \operatorname{subopt}_m(x)>\rho-\operatorname{err}_{m,k}(\rho).
    \label{eq:app-outside-barrier}
\end{equation}
Consequently, if
\(\operatorname{subopt}_m(x)\le\rho-\operatorname{err}_{m,k}(\rho)\), then
\begin{equation}
    x\in\X_\rho,
    \qquad
    F(x)-F^\star
    \le\operatorname{subopt}_m(x)+\operatorname{err}_{m,k}(\rho)
    \le\rho.
    \label{eq:app-local-transfer}
\end{equation}
\end{lemma}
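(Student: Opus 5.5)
Let \(e\defeq\operatorname{err}_{m,k}(\rho)\) and fix a minimizer \(x^\star\in\X\) of \(F\), so \(F(x^\star)=F^\star\) and \(x^\star\in\X_\rho\). The plan is to prove \eqref{eq:app-outside-barrier} by a sublevel-barrier argument along the segment from \(x^\star\) to \(x\). Then \eqref{eq:app-local-transfer} follows from the contrapositive and the regional approximation bound.

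\emph{Step 1: surrogate bounds on \(\X_\rho\).} For \(z\in\X_\rho\), Proposition~\ref{prop:approximation} (the pointwise bound \eqref{eq:pointwise-approximation}, taking suprema over \(\X_\rho\)) gives
\[
    F(z)-e\le\widetilde F_m(z)\le F(z).
\]
Both the lower and upper inequalities are needed. From the upper one, \(\widetilde F_m^\star\le\widetilde F_m(x^\star)\le F^\star\).

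\emph{Step 2: the barrier.} Take \(x\notin\X_\rho\), so \(F(x)>F^\star+\rho\). The function \(\theta\mapsto F(x^\star+\theta(x-x^\star))\) is continuous on \([0,1]\), equal to \(F^\star\) at \(\theta=0\) and exceeding \(F^\star+\rho\) at \(\theta=1\). By the intermediate value theorem there is \(\theta\in(0,1)\) with \(z=x^\star+\theta(x-x^\star)\in\X\) and \(F(z)=F^\star+\rho\). In particular \(z\in\X_\rho\). Step 1 then gives
\[
    \widetilde F_m(z)\ge F^\star+\rho-e\ge\widetilde F_m^\star+\rho-e.
\]
Convexity of \(\widetilde F_m\) gives \(\widetilde F_m(z)\le(1-\theta)\widetilde F_m(x^\star)+\theta\widetilde F_m(x)\). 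Subtracting \(\widetilde F_m^\star\) from both sides yields
\[
    \rho-e\le\operatorname{subopt}_m(z)\le(1-\theta)\operatorname{subopt}_m(x^\star)+\theta\,\operatorname{subopt}_m(x).
\]
This is the step I expect to be the main obstacle. The term \(\operatorname{subopt}_m(x^\star)\) need not vanish: by Step 1 it is only bounded by \(e\), not zero. So comparing \(x\) with \(x^\star\) alone does not directly yield the strict inequality.

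My first attempt at the fix is to run the segment from a minimizer \(\tilde x\) of \(\widetilde F_m\) instead of from \(x^\star\). Then \(\operatorname{subopt}_m(\tilde x)=0\). To make the intermediate point land on the boundary of \(\X_\rho\), I need \(\tilde x\in\X_\rho\). If \(\tilde x\notin\X_\rho\), I would first apply the \(x^\star\)-segment argument to \(\tilde x\) to reach a contradiction. That requires showing \(\operatorname{subopt}_m\) is positive at the boundary crossing \(z'\) between \(x^\star\) and \(\tilde x\). Indeed \(\widetilde F_m(z')\ge F^\star+\rho-e>F^\star\ge\widetilde F_m(x^\star)\), while convexity along the segment from \(x^\star\) to \(\tilde x\) forces \(\widetilde F_m(z')\le\max\{\widetilde F_m(x^\star),\widetilde F_m^\star\}=\widetilde F_m(x^\star)\). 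This is a contradiction, so \(\tilde x\in\X_\rho\).

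Now redo Step 2 with \(\tilde x\) in place of \(x^\star\). Since \(F(\tilde x)\le F^\star+\rho<F(x)\), continuity gives \(\theta\in[0,1)\) with \(F(z)=F^\star+\rho\). Then
\[
    \rho-e\le\operatorname{subopt}_m(z)\le\theta\,\operatorname{subopt}_m(x)\le\operatorname{subopt}_m(x),
\]
using \(\operatorname{subopt}_m(x)\ge0\). If \(\theta>0\) and \(\rho-e>0\), then \(\operatorname{subopt}_m(x)\ge(\rho-e)/\theta>\rho-e\), which is the strict inequality required. If \(\theta=0\), then \(z=\tilde x\) and \(\operatorname{subopt}_m(z)=0<\rho-e\), which contradicts the display. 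So strictness holds in every case.

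\emph{Step 3: consequence.} If \(\operatorname{subopt}_m(x)\le\rho-e\), the contrapositive of \eqref{eq:app-outside-barrier} gives \(x\in\X_\rho\). Step 1 at \(x\) and at \(\tilde x\in\X_\rho\) then gives
\[
    F(x)\le\widetilde F_m(x)+e=\operatorname{subopt}_m(x)+\widetilde F_m^\star+e\le\operatorname{subopt}_m(x)+F^\star+e,
\]
where the last inequality uses \(\widetilde F_m^\star\le F^\star\) from Step 1. This is \eqref{eq:app-local-transfer}, and the final bound \(\le\rho\) follows from the hypothesis.
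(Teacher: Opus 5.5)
Your proof is correct, but it takes a detour that the paper's proof avoids. The ``main obstacle'' in your first attempt comes from weakening the lower bound $\widetilde F_m(z)\ge F^\star+\rho-e$ to $\widetilde F_m(z)\ge\widetilde F_m^\star+\rho-e$, i.e., from measuring everything relative to $\widetilde F_m^\star$.

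The paper keeps $F^\star$ as the reference level and pivots directly at $x^\star$. With $z=(1-\lambda)x^\star+\lambda x$ and $F(z)=F^\star+\rho$,
\[
F^\star+\rho-e\le\widetilde F_m(z)\le(1-\lambda)\widetilde F_m(x^\star)+\lambda\bigl(\widetilde F_m^\star+\operatorname{subopt}_m(x)\bigr)\le F^\star+\lambda\,\operatorname{subopt}_m(x).
\]
The last step uses $\widetilde F_m(x^\star)\le F^\star$ and $\widetilde F_m^\star\le F^\star$, so both deficits point in the favorable direction. Then $\rho-e\le\lambda\,\operatorname{subopt}_m(x)$ with $\lambda\in(0,1)$ gives the strict inequality immediately.

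You already use exactly this $F^\star$ baseline in your auxiliary step, where you write $\widetilde F_m(z')\ge F^\star+\rho-e>F^\star\ge\widetilde F_m(x^\star)$. Applying the same comparison to $x$ instead of $\tilde x$ would have finished the argument in one pass.

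Your two-stage route is valid, including the $\theta=0$ case and the strictness argument. It first shows that every minimizer $\tilde x$ of $\widetilde F_m$ lies in $\X_\rho$, then pivots at $\tilde x$, where $\operatorname{subopt}_m(\tilde x)=0$. However, that auxiliary fact is just the special case $\operatorname{subopt}_m(\tilde x)=0\le\rho-e$ of the lemma itself. So the extra stage buys no generality, and the paper's single-segment argument is simply shorter. Your Step~3 matches the paper.
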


\begin{proof}
Let \(x^\star\in\arg\min_\X F\).  If \(x\notin\X_\rho\), continuity along
the segment from \(x^\star\) to \(x\) gives
\(z=(1-\lambda)x^\star+\lambda x\), with \(\lambda\in(0,1)\) and
\(F(z)=F^\star+\rho\).  Proposition~\ref{prop:approximation} on
\(\X_\rho\) yields 
\[
    \widetilde F_m(z)\ge F^\star+\rho-\operatorname{err}_{m,k}(\rho).
\]
On the other hand, \(\widetilde F_m(x^\star)\le F^\star\),
\(\widetilde F_m^\star\le F^\star\), and convexity give
\begin{align*}
    \widetilde F_m(z)
    &\le(1-\lambda)\widetilde F_m(x^\star)+\lambda\widetilde F_m(x)\\
    &=(1-\lambda)\widetilde F_m(x^\star)
      +\lambda\bigl(\widetilde F_m^\star+\operatorname{subopt}_m(x)\bigr)\\
    &\le F^\star+\lambda\operatorname{subopt}_m(x).
\end{align*}
Therefore
\(\rho-\operatorname{err}_{m,k}(\rho)
\le\lambda\operatorname{subopt}_m(x)<\operatorname{subopt}_m(x)\), proving
\eqref{eq:app-outside-barrier}.  Its contrapositive gives the containment in
\eqref{eq:app-local-transfer}; inside \(\X_\rho\),
\[
    F(x)-F^\star
    \le F(x)-\widetilde F_m^\star
    = F(x)-\widetilde F_m(x)+\operatorname{subopt}_m(x)
    \le \operatorname{err}_{m,k}(\rho)+\operatorname{subopt}_m(x).
\]
\end{proof}

\begin{proof}[Proof of Theorem~\ref{thm:localized-transfer}]
Lemma~\ref{lem:app-barrier} implies
\[
\{\widehat x\notin\X_\rho\}
\subseteq
\left\{
\operatorname{subopt}_m(\widehat x)>
\rho-\operatorname{err}_{m,k}(\rho)
\right\}.
\]
Therefore, by Markov's inequality and
\(\mathbb E[\operatorname{subopt}_m(\widehat x)]\le\beta\),
\[
\mathbb P(\widehat x\notin\X_\rho)
\le
\frac{\beta}
{\rho-\operatorname{err}_{m,k}(\rho)},
\]
which proves \eqref{eq:localized-transfer-probability}.

For the expectation bound, split according to whether
\(\widehat x\in\X_\rho\).  On \(\X_\rho\), the localized surrogate-approximation bound, together with
\(\widetilde F_m^\star\le F^\star\), gives
$$
\begin{aligned}
	F(x)-F^{\star} & \leq F(x)-\widetilde{F}_m^{\star} \\
	& =F(x)-\widetilde{F}_m(x)+\operatorname{subopt}_m(x) \\
	& \leq \operatorname{err}_{m, k}(\rho)+\operatorname{subopt}_m(x) .
\end{aligned}
$$
On the complementary event, Lipschitz continuity of \(F\) and the diameter
bound yield
\[
F(\widehat x)-F^\star
\le
G\|\widehat x-x^\star\|_2
\le
GD_{\X},
\]
where \(x^\star\in\arg\min_\X F\).  Hence
\begin{align*}
	\mathbb E[F(\widehat x)-F^\star]
	&\le
	\mathbb E\!\left[
	\bigl(\operatorname{subopt}_m(\widehat x)
	+\operatorname{err}_{m,k}(\rho)\bigr)
	\mathbf 1_{\{\widehat x\in\X_\rho\}}
	\right]
	+GD_{\X}\,\mathbb P(\widehat x\notin\X_\rho)\\
	&\le
	\beta+\operatorname{err}_{m,k}(\rho)
	+GD_{\X}\,\mathbb P(\widehat x\notin\X_\rho).
\end{align*}
Combining the preceding Markov bound with the trivial bound
\(\mathbb P(\widehat x\notin\X_\rho)\le1\) gives
\[
\mathbb E[F(\widehat x)-F^\star]
\le
\operatorname{err}_{m,k}(\rho)+\beta
+GD_{\X}
\min\!\left\{
1,\,
\frac{\beta}{\rho-\operatorname{err}_{m,k}(\rho)}
\right\},
\]
which proves \eqref{eq:localized-transfer-expectation}.
\end{proof}

\section{Optimization and Query Complexity}
\label{app:complexity}

\begin{lemma}
\label{lem:app-sgd}
For any \(T\ge1\) and constant stepsize \(\eta_t\equiv\eta>0\), the output of
Algorithm~\ref{alg:smax} satisfies
\begin{equation}
    \mathbb E[\operatorname{subopt}_m(\bar x_T)]
    \le\frac{D_{\X}^2}{2\eta T}+\frac{\eta G_m^2}{2}.
    \label{eq:app-sgd-bound}
\end{equation}
\end{lemma}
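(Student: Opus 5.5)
The bound in Lemma~\ref{lem:app-sgd} follows from the standard projected stochastic subgradient analysis applied to \(\widetilde F_m\). Lemma~\ref{lem:app-surrogate-gradient} supplies the oracle properties: conditionally on \(\mathcal H_t\), the direction \(\hat g_t\defeq g_{I_t}(x_t;\zeta_t)\) has mean \(s_t\in\partial\widetilde F_m(x_t)\) and second moment at most \(G_m^2\).

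Fix a minimizer \(\tilde x^\star\in\arg\min_\X\widetilde F_m\), which exists by compactness and Lipschitz continuity. Nonexpansiveness of \(\Pi_\X\), together with \(\tilde x^\star\in\X\), gives
\[
\|x_{t+1}-\tilde x^\star\|_2^2\le\|x_t-\tilde x^\star\|_2^2-2\eta\langle\hat g_t,x_t-\tilde x^\star\rangle+\eta^2\|\hat g_t\|_2^2 .
\]
Next, take conditional expectation given \(\mathcal H_t\). Since \(x_t\) is \(\mathcal H_t\)-measurable, the cross term becomes \(\langle s_t,x_t-\tilde x^\star\rangle\). By the subgradient inequality this is at least \(\widetilde F_m(x_t)-\widetilde F_m^\star=\operatorname{subopt}_m(x_t)\). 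The quadratic term is at most \(\eta^2G_m^2\).

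Taking total expectations and summing over \(t=1,\dots,T\) telescopes the distances. Using \(\|x_1-\tilde x^\star\|_2\le D_\X\) and dropping the nonnegative final distance, we get
\[
\sum_{t=1}^T\mathbb E[\operatorname{subopt}_m(x_t)]\le\frac{D_\X^2}{2\eta}+\frac{\eta TG_m^2}{2}.
\]
Finally, \(\operatorname{subopt}_m\) is convex because \(\widetilde F_m\) is an average of convex maxima. Jensen's inequality then gives \(\operatorname{subopt}_m(\bar x_T)\le T^{-1}\sum_t\operatorname{subopt}_m(x_t)\). Dividing by \(T\) yields \eqref{eq:app-sgd-bound}.

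The step that needs the most care is the conditioning. The expectation in the cross term must be taken with respect to both the subset \(S_t\) and the oracle noise \(\zeta_t\), conditionally on \(\mathcal H_t\) and not on \(S_t\). This is why Lemma~\ref{lem:app-surrogate-gradient} is stated with \(\mathcal H_t\) defined as the history before \(S_t\) is drawn. Integrability of all terms follows from the second-moment bound and the boundedness of \(\X\).
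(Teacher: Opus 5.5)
Your proposal is correct and follows essentially the same argument as the paper. Both use nonexpansiveness of \(\Pi_{\X}\), the conditional oracle properties from Lemma~\ref{lem:app-surrogate-gradient} with respect to \(\mathcal H_t\), the subgradient inequality for \(\widetilde F_m\), telescoping with \(\|x_1-\tilde x^\star\|_2\le D_{\X}\), and Jensen's inequality for the uniform average.
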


\begin{proof}
Let \(x_m^\star\in\arg\min_\X\widetilde F_m\).  Nonexpansiveness of projection,
Lemma~\ref{lem:app-surrogate-gradient}, and the subgradient inequality imply
\begin{align*}
    \mathbb E_t\|x_{t+1}-x_m^\star\|_2^2
    &\le\|x_t-x_m^\star\|_2^2
      -2\eta\bigl[\widetilde F_m(x_t)-\widetilde F_m^\star\bigr]
      +\eta^2G_m^2.
\end{align*}
Summing, using \(\|x_1-x_m^\star\|_2\le D_{\X}\), and applying convexity to the
uniform average proves \eqref{eq:app-sgd-bound}.
\end{proof}

We first prove a selection-weighted version of the localized complexity result stated in the main text, and then turn to Theorem~\ref{thm:localized-complexity} itself.

\begin{theorem}
\label{thm:app-complexity}
For a target \(\varepsilon>0\) and confidence parameter \(\delta\in(0,1)\),
suppose that some \(k\in[N]\) satisfies
\(\operatorname{gap}_k(\X_\varepsilon)\le\varepsilon/4\).  Choose \(m\) as in
\eqref{eq:subset-choice}.  Assume \(D_{\X}G_m>0\) and set
\[
    T=
      \left\lceil\frac{4D_{\X}^2G_m^2}{\delta^2\varepsilon^2}\right\rceil,
\]
and run Algorithm~\ref{alg:smax} with
\(\eta_t\equiv D_{\X}/(G_m\sqrt T)\).  Its output satisfies
\begin{equation}
    \mathbb P\!\left(F(\bar x_T)-F^\star\le\varepsilon\right)\ge1-\delta.
    \label{eq:app-final-guarantee}
\end{equation}
Consequently, at fixed confidence,
the numbers of stochastic-subgradient and component-value oracle calls used by
the method satisfy
\begin{align}
    \mathsf Q_{\rm grad}
    &=O\!\left(1+\frac{D_{\X}^2G_m^2}{\varepsilon^2}\right),
    \label{eq:app-grad-complexity}\\
    \mathsf Q_{\rm val}
    &=O\!\left(
      \left[1+\frac{D_{\X}^2G_m^2}{\varepsilon^2}\right]
      \min\!\left\{N-k+1,
      1+\frac Nk\log\!\left(1+\frac{W(\X_\varepsilon)}{\varepsilon}\right)
      \right\}\right).
    \label{eq:app-val-complexity}
\end{align}
\end{theorem}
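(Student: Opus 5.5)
The argument combines Lemma~\ref{lem:app-sgd} with Theorem~\ref{thm:localized-transfer} at \(\rho=\varepsilon\). The only care needed is to show that the choice~\eqref{eq:subset-choice} of \(m\) makes the localized error small enough.

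\emph{Bounding the localized error.} I will first show \(\operatorname{err}_{m,k}(\varepsilon)\le\varepsilon/2\). By hypothesis \(\operatorname{gap}_k(\X_\varepsilon)\le\varepsilon/4\), so it suffices to prove \(W(\X_\varepsilon)\pi^{\rm miss}_{m,k}\le\varepsilon/4\). I split according to which term attains the outer minimum in~\eqref{eq:subset-choice}.
\begin{itemize}
\item If \(m=N-k+1\), then \(m>N-k\), so \(\binom{N-k}{m}=0\) and \(\pi^{\rm miss}_{m,k}=0\).
\item Otherwise \(m\ge (N/k)\log(1+4W/\varepsilon)\), with \(W=W(\X_\varepsilon)\). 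Proposition~\ref{prop:approximation} then gives
\[
\pi^{\rm miss}_{m,k}\le e^{-mk/N}\le\frac{1}{1+4W/\varepsilon}.
\]
Hence \(W\pi^{\rm miss}_{m,k}\le W\varepsilon/(\varepsilon+4W)\le\varepsilon/4\). If \(W=0\), this term vanishes trivially.
\end{itemize}
In both cases \(\rho-\operatorname{err}_{m,k}(\rho)\ge\varepsilon/2>0\), so Theorem~\ref{thm:localized-transfer} applies. I also need \(m\in[N]\), which holds because \(k\ge1\) gives \(N-k+1\le N\) and the inner max is at least \(1\).

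\emph{Optimization bound.} Lemma~\ref{lem:app-sgd} with \(\eta=D_{\X}/(G_m\sqrt T)\) gives
\[
\mathbb E[\operatorname{subopt}_m(\bar x_T)]\le \frac{D_{\X}G_m}{\sqrt T}=:\beta.
\]
The choice of \(T\) ensures \(\sqrt T\ge 2D_{\X}G_m/(\delta\varepsilon)\), so \(\beta\le\delta\varepsilon/2\). Also \(\bar x_T\in\X\) by convexity of \(\X\).

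\emph{Combining.} Inequality~\eqref{eq:localized-transfer-probability} then yields
\[
\mathbb P\bigl(F(\bar x_T)-F^\star\le\varepsilon\bigr)\ge 1-\frac{\beta}{\varepsilon/2}\ge1-\delta,
\]
which is~\eqref{eq:app-final-guarantee}.

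\emph{Query counts.} Each iteration uses one subgradient query and \(m\) value queries, so \(\mathsf Q_{\rm grad}=T\) and \(\mathsf Q_{\rm val}=mT\).
\begin{itemize}
\item At fixed \(\delta\), \(T\le 1+4D_{\X}^2G_m^2/(\delta^2\varepsilon^2)=O(1+D_{\X}^2G_m^2/\varepsilon^2)\), which is~\eqref{eq:app-grad-complexity}.
\item For \(m\), use \(\lceil a\rceil\le 1+a\) and \(\log(1+4u)\le 4\log(1+u)\) for \(u\ge0\). The latter holds because \(1+4u\le(1+u)^4\). Together these give \(m\le\min\{N-k+1,\,1+4(N/k)\log(1+W/\varepsilon)\}\). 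Multiplying by the bound on \(T\) gives~\eqref{eq:app-val-complexity}.
\end{itemize}

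The main point requiring care is the first step: matching the logarithmic choice of \(m\) to the \(\varepsilon/4\) width budget. That step handles both branches of the minimum and the degenerate case \(W=0\). The rest is direct substitution into the earlier results.
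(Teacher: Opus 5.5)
Your proposal is correct and follows the paper's proof essentially step for step: you bound $\operatorname{err}_{m,k}(\varepsilon)\le\varepsilon/2$ by splitting on the two branches of the subset choice, apply Lemma~\ref{lem:app-sgd} to get $\beta\le\delta\varepsilon/2$, and invoke Theorem~\ref{thm:localized-transfer} at $\rho=\varepsilon$. You also make explicit a few details the paper leaves implicit: the conversion $\log(1+4u)\le 4\log(1+u)$ in the query count, the checks $m\in[N]$ and $\bar x_T\in\X$, and the degenerate case $W=0$.
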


If \(D_{\X}G_m=0\), the conclusion is immediate: either \(\X\) is a
singleton or the sampled surrogate is constant, and the transfer theorem
applies with zero surrogate suboptimality.

\begin{proof}
Put \(W_\varepsilon\defeq W(\X_\varepsilon)\).
Unless the outer minimum in \eqref{eq:subset-choice} selects \(N-k+1\), the bound \(\pi^{\rm miss}_{m,k}\le\exp(-mk/N)\) from Proposition~\ref{prop:approximation} gives
\[
    W_\varepsilon\pi^{\rm miss}_{m,k}
    \le\frac{W_\varepsilon}{1+4W_\varepsilon/\varepsilon}
    \le\frac\varepsilon4.
\]
If \(m=N-k+1\), the binomial numerator is zero.  Thus
\(\operatorname{err}_{m,k}(\varepsilon)\le\varepsilon/2\).
Lemma~\ref{lem:app-sgd}
with \(\eta=D_{\X}/(G_m\sqrt T)\) yields
\[
\mathbb E[\operatorname{subopt}_m(\bar x_T)]
\le \frac{D_{\X}G_m}{\sqrt T}
\le \frac{\delta\varepsilon}{2}.
\]
Theorem~\ref{thm:localized-transfer} now proves \eqref{eq:app-final-guarantee}.  Each
iteration uses one subgradient and \(m\) values, which gives
\eqref{eq:app-grad-complexity}--\eqref{eq:app-val-complexity}.
\end{proof}

\begin{proof}[Proof of Theorem~\ref{thm:localized-complexity}]
The subset choice gives
\(\operatorname{err}_{m,k}(\varepsilon)\le\varepsilon/2\) by the same
argument as above.  Under the uniform main-text moment bound, Lemma~\ref{lem:app-sgd}
with \(\eta=D_{\X}/(G\sqrt T)\) and \(G_m\le G\) yields
\[
    \mathbb E[\operatorname{subopt}_m(\bar x_T)]
    \le \frac{D_{\X}G}{\sqrt T}
    \le \frac{\delta\varepsilon}{2}.
\]
Theorem~\ref{thm:localized-transfer} now gives the stated probability.  Finally,
each iteration uses one subgradient query and \(m\) component-value queries;
substituting \eqref{eq:subset-choice} gives \eqref{eq:query-complexity}.
\end{proof}

\begin{remark}
	As a consequence of Theorem~\ref{thm:app-complexity} and
	Lemma~\ref{lem:app-surrogate-gradient}, under only the RMS condition we have
	\(G_m^2\le mG_{\rm rms}^2\), and hence, at fixed confidence,
	\begin{equation}
		\mathsf Q_{\rm grad}
		=O\!\left(
		1+\frac{D_{\X}^2mG_{\rm rms}^2}{\varepsilon^2}
		\right),
		\qquad
		\mathsf Q_{\rm val}
		=O\!\left(
		m+\frac{D_{\X}^2m^2G_{\rm rms}^2}{\varepsilon^2}
		\right).
		\label{eq:app-averaged-complexity}
	\end{equation}
	Thus the RMS condition alone can introduce an additional factor of \(m\) in
	the value-query complexity.  In particular, the choice
	\(m=\widetilde O(N/k)\) does not by itself imply
	\(\widetilde O(N/k)\) value-query dependence; such a bound additionally
	requires the selection-weighted moment \(G_m\) to remain suitably controlled.
\end{remark}

\section{Value-Query Lower Bound}
\label{app:value-lower-bound}

\begin{proof}[Proof of Theorem~\ref{thm:value-lower-bound}]
Partition the indices into two known sets \(P_+\) and \(P_-\), each of size
\(N/2\), and set
\[
    \sigma_i=+1\quad(i\in P_+),
    \qquad
    \sigma_i=-1\quad(i\in P_-).
\]
Choose a hidden sign \(B\) uniformly from \(\{+1,-1\}\), and conditional on
\(B\), choose a uniform \(k\)-element subset \(S_B\subseteq P_B\).  For
\(x\in[-R,R]\), define
\begin{equation}
    f_i(x)=\sigma_iGx+4GR\,\mathbf 1\{i\in S_B\}.
    \label{eq:app-lower-bound-components}
\end{equation}
Every component is affine with its unique subgradient equal to
\(\sigma_iG\).  Consequently, subgradient responses are independent of both
\(B\) and \(S_B\), no matter how many such queries are made.

For \(i\in S_B\), equation~\eqref{eq:app-lower-bound-components} gives
\(f_i(x)=BGx+4GR\ge3GR\), whereas every nonspecial component has value at most
\(GR\).  Hence the \(k\) special components are precisely the maximizers and
\begin{equation}
    F_B(x)=4GR+BGx,
    \qquad
    \operatorname{gap}_k(\X)=0.
    \label{eq:app-lower-bound-objective}
\end{equation}
All component values lie between \(-GR\) and \(5GR\), which also proves
\(W(\X)\le6GR\).  Moreover,
\[
    F_B^\star=3GR,
    \qquad
    F_B(x)-F_B^\star=G(R+Bx).
\]
If \(\varepsilon\le GR/2\), an \(\varepsilon\)-accurate point must therefore
satisfy \(x\le-R/2\) when \(B=+1\), and \(x\ge R/2\) when \(B=-1\).  Thus the
sign of an accurate output identifies \(B\).

It remains to bound the information supplied by value queries.  Couple the
experiments \(B=+1\) and \(B=-1\) by drawing independent uniform \(k\)-subsets
\(S_+\subset P_+\) and \(S_-\subset P_-\).  First fix the algorithm's internal
randomness, and consider the common oracle history---the sequence of queries
and oracle responses---generated when every value query \((x,i)\) receives the
nonspecial response \(\sigma_iGx\).  Let
\(A_+\subseteq P_+\) and \(A_-\subseteq P_-\) be the distinct indices queried
along this history.  If
\[
    S_+\cap A_+=\varnothing
    \quad\text{and}\quad
    S_-\cap A_-=\varnothing,
\]
then the algorithm receives the same value and subgradient responses, makes
the same subsequent queries, and returns the same output in the two
experiments.  Since a fixed index in either half belongs to its uniform special
set with probability \(2k/N\), a union bound gives
\[
    \mathbb P(\text{the coupled oracle histories differ})
    \le \frac{2k}{N}\bigl(|A_+|+|A_-|\bigr)
    \le \frac{2k\mathsf Q_{\rm val}}{N}.
\]
The same coupling is valid after averaging over the algorithm's randomness.
Therefore the total-variation distance between the two distributions of the
complete oracle history and output is at most
\(2k\mathsf Q_{\rm val}/N\).  Under the uniform prior on \(B\), no test based
on this information can identify \(B\) with probability greater than
\[
    \frac12+\frac{k\mathsf Q_{\rm val}}{N}.
\]
An algorithm that is \(\varepsilon\)-accurate with probability at least \(2/3\)
on every instance induces, by the sign rule above, a test with average success
probability at least \(2/3\).  Consequently
\(\frac12+k\mathsf Q_{\rm val}/N\ge\frac23\), which is exactly
\eqref{eq:value-lower-bound}.
\end{proof}

\section{Tensor-Grid Derivation}
\label{app:grid-details}

\begin{proposition}
\label{prop:fine-grid-full}
Under~\eqref{eq:chebyshev-components}, for integers
\(a_\ell\in\{0,\ldots,n_\ell-1\}\), define
\[
    k(a)\defeq\prod_{\ell=1}^q(a_\ell+1),
    \qquad
    d(a)\defeq\max_{1\le\ell\le q}\frac{a_\ell}{n_\ell-1}.
\]
Then
\begin{equation}
    \operatorname{gap}_{k(a)}(\X)\le L_\tau d(a),
    \qquad W(\X)\le L_\tau.
    \label{eq:grid-gap}
\end{equation}
Writing \(H(x)=\sup_{\tau\in[0,1]^q}r_x(\tau)\), for every \(x\in\X\),
\begin{equation}
    0\le H(x)-F(x)\le
    \Delta_{\mathcal T}\defeq\frac{L_\tau}{2}
      \max_\ell\frac1{n_\ell-1}.
    \label{eq:grid-discretization}
\end{equation}
Thus, if \(F(\widehat x)\le\min_{x\in\X}F(x)+\xi\), then
\begin{equation}
    H(\widehat x)-\min_{x\in\X}H(x)\le\xi+\Delta_{\mathcal T}.
    \label{eq:grid-continuous-transfer}
\end{equation}
For \(\varepsilon>0\), define
\(A_\varepsilon\defeq 1+4L_\tau/\varepsilon\).  If \(L_\tau>0\), take
\begin{equation}
    a_\ell
    =\min\!\left\{n_\ell-1,\;
      \left\lfloor\frac{\varepsilon(n_\ell-1)}{4L_\tau}\right\rfloor
      \right\},
    \qquad k_\varepsilon=k(a),
    \label{eq:grid-K}
\end{equation}
and if \(L_\tau=0\), take \(a_\ell=n_\ell-1\).  Then
\begin{equation}
    \operatorname{gap}_{k_\varepsilon}(\X)\le\frac{\varepsilon}{4},
    \qquad
    \frac{N}{k_\varepsilon}\le A_\varepsilon^q.
    \label{eq:grid-epsilon-bound}
\end{equation}
\end{proposition}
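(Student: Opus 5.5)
The plan is to fix \(x\in\X\) and work directly with the grid geometry, treating the three claims (gap bound, width bound, discretization bound) separately and then specializing the parameters.

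\textbf{Gap bound.} Let \(\tau^\star=\tau_{\boldsymbol i^\star}\in\mathcal T\) be a grid node attaining \(F(x)\). In each coordinate \(\ell\), choose a set of \(a_\ell+1\) consecutive grid values of \(\{0,\tfrac1{n_\ell-1},\ldots,1\}\) that contains \(\tau^\star_\ell\). Such a window exists because \(a_\ell+1\le n_\ell\); slide it to stay inside \([0,1]\). Every value in the window lies within \(a_\ell/(n_\ell-1)\) of \(\tau^\star_\ell\). The product of these windows is a sub-box of \(k(a)\) distinct nodes, each within \(\ell_\infty\)-distance \(d(a)\) of \(\tau^\star\). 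By \eqref{eq:chebyshev-components}, each of these \(k(a)\) components has value at least \(F(x)-L_\tau d(a)\). The \(k(a)\) largest component values dominate any \(k(a)\) of them, so \(\operatorname{AT}_{k(a)}(x)\ge F(x)-L_\tau d(a)\). Taking the supremum over \(x\) gives the gap bound. The width bound follows since any two nodes satisfy \(\|\tau-\tau'\|_\infty\le1\).

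\textbf{Discretization bound.} The inequality \(H\ge F\) is immediate, since \(\mathcal T\subseteq[0,1]^q\). For the upper bound, note that \(r_x\) is continuous on the compact cube, so the supremum is attained at some \(\tau\). Round each coordinate of \(\tau\) to the nearest grid value; the rounding error is at most \(\tfrac12\cdot\tfrac1{n_\ell-1}\). Applying \eqref{eq:chebyshev-components} then yields \eqref{eq:grid-discretization}. For the transfer \eqref{eq:grid-continuous-transfer}, write
\[
H(\widehat x)\le F(\widehat x)+\Delta_{\mathcal T}\le \min F+\xi+\Delta_{\mathcal T}\le\min H+\xi+\Delta_{\mathcal T},
\]
where the last step uses \(F\le H\) pointwise.

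\textbf{Parameter choice.} If \(L_\tau=0\), the choice \(a_\ell=n_\ell-1\) gives \(k=N\), gap \(0\), and \(N/k=1\). Otherwise, with \(a_\ell\) as in \eqref{eq:grid-K}, we have \(a_\ell/(n_\ell-1)\le\varepsilon/(4L_\tau)\) in both branches of the minimum (using \(\varepsilon/(4L_\tau)\ge1\) when the minimum selects \(n_\ell-1\)). Hence \(d(a)\le\varepsilon/(4L_\tau)\) and the gap is at most \(\varepsilon/4\).

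The remaining claim \(n_\ell/(a_\ell+1)\le A_\varepsilon\) is the one step needing care. Set \(c=\varepsilon/(4L_\tau)\).
\begin{itemize}
\item If \(a_\ell=n_\ell-1\), the ratio is \(1\le A_\varepsilon\).
\item Otherwise \(a_\ell+1>c(n_\ell-1)\), and also \(a_\ell+1\ge1\). Splitting \(n_\ell=(n_\ell-1)+1\),
\[
\frac{n_\ell}{a_\ell+1}\le\frac{n_\ell-1}{a_\ell+1}+\frac1{a_\ell+1}<\frac1c+1=A_\varepsilon .
\]
\end{itemize}
Taking the product over \(\ell\) gives \(N/k_\varepsilon\le A_\varepsilon^q\).
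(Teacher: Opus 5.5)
Your proposal is correct and follows essentially the same route as the paper's proof. Both use a product block of $k(a)$ grid nodes around the discrete maximizer for the gap bound, the unit $\ell_\infty$-diameter for $W$, nearest-node rounding of a continuous maximizer for the discretization bound, and the same split $n_\ell=(n_\ell-1)+1$ with $a_\ell+1>c(n_\ell-1)$ for the ratio bound; the only cosmetic difference is that you case-split on which branch of the minimum defining $a_\ell$ is active, whereas the paper splits on whether $\varepsilon/(4L_\tau)\ge 1$, which is equivalent.
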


\begin{proof}
Fix \(x\) and let \(\tau_{\boldsymbol i^\star}\) maximize \(r_x\) on the tensor
grid.  In coordinate \(\ell\), at least \(a_\ell+1\) grid indices lie within
\(a_\ell\) index steps of \(i_\ell^\star\).  Their Cartesian product therefore
contains at least \(k(a)\) grid points within \(\ell_\infty\)-distance
\(d(a)\) of \(\tau_{\boldsymbol i^\star}\).  At each such point,
\[
    r_x(\tau_{\boldsymbol i})\ge F(x)-L_\tau d(a).
\]
Averaging the largest \(k(a)\) residuals proves the first inequality in
\eqref{eq:grid-gap}.  The \(\ell_\infty\)-diameter of \([0,1]^q\) is one, so
the same Lipschitz bound gives \(W(\X)\le L_\tau\).

Every \(\tau\in[0,1]^q\) is within
\(\frac12\max_\ell(n_\ell-1)^{-1}\) of a tensor-grid point.  Comparing a
continuous maximizer with its nearest grid point proves
\eqref{eq:grid-discretization}.  If \(\widehat x\) is \(\xi\)-suboptimal for
\(F\), then
\[
 H(\widehat x)\le F(\widehat x)+\Delta_{\mathcal T}
 \le \min_\X F+\xi+\Delta_{\mathcal T}
 \le \min_\X H+\xi+\Delta_{\mathcal T},
\]
which proves the continuous-domain statement.

For \(L_\tau>0\), \eqref{eq:grid-K} gives
\(a_\ell/(n_\ell-1)\le\varepsilon/(4L_\tau)\), hence the first inequality
in~\eqref{eq:grid-epsilon-bound}.  Put
\(s=\varepsilon/(4L_\tau)\) and \(p_\ell=n_\ell-1\).  If \(s\ge1\), then
\(a_\ell=p_\ell\).  If \(s<1\), then
\(sp_\ell<a_\ell+1\), and therefore
\[
    \frac{n_\ell}{a_\ell+1}
    =\frac{p_\ell+1}{a_\ell+1}
    <\frac1s+\frac1{a_\ell+1}
    \le 1+\frac1s=A_\varepsilon.
\]
Multiplying over coordinates proves the second inequality
in~\eqref{eq:grid-epsilon-bound}.  If \(L_\tau=0\), taking every
\(a_\ell=n_\ell-1\) gives \(k_\varepsilon=N\), zero gap, and
\(A_\varepsilon=1\).
\end{proof}

\begin{proof}[Proof of Corollary~\ref{cor:fine-grid}]
The structural bounds follow directly from
Proposition~\ref{prop:fine-grid-full}.  Because
\(\X_\varepsilon\subseteq\X\), the same \(k_\varepsilon\) satisfies
\(\operatorname{gap}_{k_\varepsilon}(\X_\varepsilon)\le\varepsilon/4\) and
\(W(\X_\varepsilon)\le L_\tau\).  In~\eqref{eq:subset-choice}, use
\(N/k_\varepsilon\le A_\varepsilon^q\),
\(1+4W(\X_\varepsilon)/\varepsilon\le A_\varepsilon\), and
\(\lceil z\rceil\le z+1\).  This gives the displayed subset-size bound.
If component subgradients have norm at most \(G_r\), then the moment bound in
Theorem~\ref{thm:localized-complexity} satisfies \(G\le G_r\), and
\eqref{eq:query-complexity} gives~\eqref{eq:grid-complexity}.
\end{proof}

\section{Variable Fractional-Delay Experiment Details}
\label{app:vfd-experiment}

\subsection{Finite-max construction}

We use \(L_h=60\), polynomial order \(r=4\),
\(\omega\in[0,0.9\pi]\), and \(p\in[0,0.5]\) as in
\citet{ZhaoTay2023VFD}.  The response and target are given in
\eqref{eq:vfd-response}.  On the \(1000\times200\) training grid,
\begin{equation}
    f_{i\ell}(a)
    =\left\|
      \begin{pmatrix}\Re e_{i\ell}(a)\\ \Im e_{i\ell}(a)\end{pmatrix}
      \right\|_2,
    \qquad
    e_{i\ell}(a)=H_a(\omega_i,p_\ell)-D(\omega_i,p_\ell).
    \label{eq:app-vfd-components}
\end{equation}
Thus the training objective is
\(F_{\rm train}(a)=\max_{i,\ell}f_{i\ell}(a)\).
The symmetry-reduced coefficient vector has 153 real entries.  The
implementation uses the scaled coordinate \(u=2p\), a diagonal
reparameterization of the same degree-four polynomial space.  Every method
uses the common box \([-B,B]^{153}\), where
\(B=\max\{1,2\|a_{\rm WLS}\|_\infty\}=1.99988\), and starts from the same
tensor least-squares solution.  Component values and subgradients are
deterministic.

The denser validation grid contains \(2001\times401=802{,}401\) points.  It is
used only after configurations and output rules are fixed, solely to check the
reported responses.  Across the four reported outputs, its maximum
exceeds the training-grid maximum by at most 0.022\%.

\subsection{Optimizers and protocol}
\label{app:vfd-protocol}

Restricted-SOCP exchange solves the active complex-modulus constraints with
Clarabel 0.11.1 \citep{GoulartChen2026Clarabel}, separates on the entire training
grid, and produces the numerical reference interval
\([\ell,u]=[2.70495097,2.70495121]\times10^{-3}\).  We report
\begin{equation}
    \operatorname{relgap}_\ell(a)
    =\left[\frac{F_{\rm train}(a)-\ell}{\ell}\right]_+.
    \label{eq:app-vfd-gap}
\end{equation}
Because \(\ell\) comes from a numerical conic solve, we treat this as a gap to
a numerical reference rather than a rigorous certificate.  The solver's
residual is negligible relative to the 5\% headline threshold.

\SMax\ is selected using training values; every candidate run in the
development search is capped at the common horizon
\(\mathsf Q_{\rm dev}=25N=5\times10^6\).  The search on seeds 0--2 selects
\[
    m=16384,
    \qquad
    \eta_1=5.9909\times10^{-5},
    \qquad
    t_0=5.997,
    \qquad
    \eta_0=1.5848\times10^{-4},
\]
for \(\eta_t=\eta_0/\sqrt{t+t_0}\).  This configuration is frozen and run on
confirmatory seeds 200--219.  Full-grid SGM is tuned by the same endpoint
training criterion and horizon and uses
\(\eta_0=8.3198\times10^{-5}\), \(t_0=29.857\).  LSE--L-BFGS uses the fixed
continuation
\[
    \mu/F_{\rm train}(a_{\rm WLS})\in
    \{10^{-1},3\!\times\!10^{-2},10^{-2},3\!\times\!10^{-3},
      10^{-3},3\!\times\!10^{-4}\}
\].
This LogSumExp smoothing \citep{Nesterov2005Smoothing} is minimized by
L-BFGS-B \citep{ByrdEtAl1995LBFGSB}, with at most 35 iterations per stage.
Under the \(25N\) cap, LSE remains in the first temperature and returns the
smallest exact training maximum among its already charged trial points; this
is at least as favorable as returning only the last accepted iterate.  \SMax,
full-grid SGM, LSE, and the reported SOCP endpoint each receive at most
\(25N=5\times10^6\) values.  SOCP is continued to \(29N\) only to establish
the numerical reference used for offline evaluation.

One modulus evaluation at one \((\omega,p)\) pair counts as one component
value.  The LSE objective-gradient computation counts one full-grid pass, and
every SOCP separation scan counts \(N\) values.  The reported times and query
axes exclude the common WLS initialization, one-time setup, development
searches, and offline training/validation scoring.  In particular, the warmed
\SMax\ times exclude 1.49 seconds of Numba compilation.  Target summaries are
medians of the seed-specific first stored checkpoints below 5\%; all 20 seeds
eventually cross.  \SMax\ and SGM report uniform averages
of pre-update iterates; LSE reports its best charged training incumbent;
SOCP reports its best feasible incumbent.

The optimization runs used Python 3.12.7, NumPy 1.26.0, SciPy 1.15.3, and
Numba 0.63.1 on Windows 11.  SGM times are
medians of three repeats, \SMax\ uses the 20 confirmatory seeds, and LSE and
SOCP each have one stored run.  Thread-count environment variables were not fixed,
so wall-time ratios are machine-specific secondary evidence.

\section{Private-Inference Experiment Details}
\label{app:experiment}

\subsection{Centered-logit finite maximum}

We use all 50,000 training and 10,000 test images of CIFAR-100
\citep{Krizhevsky2009CIFAR}. The public checkpoint is
\texttt{cifar100\_resnet20}. It implements the residual architecture of
\citet{HeEtAl2016ResNet} and is available at
\url{https://github.com/chenyaofo/pytorch-cifar-models}
(accessed August 4, 2026).
The exposed module is the post-addition ReLU at the final residual block.  Its preactivation for
image \(s\), channel \(j\), and spatial position \(u\) is \(z_{sju}\).  Let
\(q=14\), let \(T_r\) be the degree-\(r\) Chebyshev polynomial, and set
\[
    B=1.10\max_{s,j,u\text{ in training}}|z_{sju}|,
    \qquad
    p_y(z)=B\sum_{r=0}^q y_rT_r(z/B).
\]
The 10\% margin is fixed before test evaluation.

Let \(w_{cj}\) be the fixed final-classifier weights and
\(|\mathcal U|=64\).  Define the original activation contribution and the
basis contributions by
\begin{align*}
    a_{sc}
    &=\sum_{j=1}^{64}w_{cj}\frac1{|\mathcal U|}
      \sum_{u\in\mathcal U}\operatorname{ReLU}(z_{sju}),\\
    v_{scr}
    &=B\sum_{j=1}^{64}w_{cj}\frac1{|\mathcal U|}
      \sum_{u\in\mathcal U}T_r(z_{sju}/B).
\end{align*}
Centering over the 100 classes gives
\[
    \phi_{scr}=v_{scr}-\frac1{100}\sum_{\ell=1}^{100}v_{s\ell r},
    \qquad
    b_{sc}=a_{sc}-\frac1{100}\sum_{\ell=1}^{100}a_{s\ell}.
\]
The training objective is
\begin{equation}
    F_{\rm train}(y)
    =\max_{\substack{s=1,\ldots,50{,}000\\c=1,\ldots,100}}
      |\langle\phi_{sc},y\rangle-b_{sc}|,
    \label{eq:app-private-max}
\end{equation}
with \(d=15\) and \(N=5{,}000{,}000\).  Class centering removes a
sample-dependent common logit shift and therefore changes neither softmax
probabilities nor the predicted class.

\subsection{Functional safeguard}

A dense exchange LP on 65,537 scalar points constructs a near-minimax
degree-14 approximation \(y_{\rm fun}\) to ReLU on \([-1,1]\).  Its
grid maximum is padded between adjacent nodes using
\(1+\sum_r|y_{{\rm fun},r}|r^2\), a global Lipschitz bound for the scalar
error.  Calling the result \(e_{\rm fun}\), we optimize over
\begin{equation}
    \mathcal Y=\left\{y:
      |y_r-y_{{\rm fun},r}|\le\frac{\kappa e_{\rm fun}}{q+1},
      \ r=0,\ldots,q\right\},
    \qquad \kappa=4.
    \label{eq:app-functional-box}
\end{equation}
Since \(|T_r(x)|\le1\) on \([-1,1]\), every feasible polynomial satisfies
\[
    \sup_{|z|\le B}|p_y(z)-\operatorname{ReLU}(z)|
    \le(1+\kappa)Be_{\rm fun}.
\]
In the recorded run, \(B=70.9681\), \(e_{\rm fun}=1.00987\times10^{-2}\),
the coefficient-box halfwidth is \(2.69299\times10^{-3}\), and the physical
scalar-error bound is 3.58344.  The largest held-out activation has
\(|z|/B=0.8389\); none of the 40,960,000 held-out activation values leaves the
fitted interval.

\subsection{Optimizers and protocol}
\label{app:experiment-protocol}

Bounded weighted least squares forms the \(15\times15\) normal equations in
one streamed pass and supplies the common warm start.  Restricted LP exchange
solves signed epigraph subproblems and performs exact streamed separation over
all five million absolute residuals \citep{ZhangEtAl2010Exchange}; each
restricted master is a linear Chebyshev fit \citep{BarrodalePhillips1975}.
LSE smooths both signed residuals by LogSumExp
\citep{Nesterov2005Smoothing}; L-BFGS-B enforces the common coefficient box
\citep{ByrdEtAl1995LBFGSB}.

The restricted LP gives the numerical lower reference
\(\ell=1.6493122505\), and all plotted gaps use
\begin{equation}
    \operatorname{relgap}_\ell(y)
    =\left[\frac{F_{\rm train}(y)-\ell}{\ell}\right]_+.
    \label{eq:app-private-gap}
\end{equation}
The final master and separation values agree to about \(1.2\times10^{-7}\)
in objective units.  We nevertheless describe \(\operatorname{relgap}_\ell\)
as a gap to
a numerical LP reference rather than a formal certificate.

\SMax\ is selected using seeds 0--2; every candidate run is capped at the
common development and production horizon
\(\mathsf Q_{\rm dev}=25N=1.25\times10^8\) and uses
\(\eta_t=\eta_0/\sqrt{t+t_0}\).  The training-only search selects
\(m=8192\), \(\eta_1=2.5192\times10^{-6}\), \(t_0=2128.895\), and
\(\eta_0=1.1626\times10^{-4}\).
This configuration is frozen and evaluated on confirmatory seeds 100--119.
Full-grid SGM and LSE are tuned by the same endpoint training criterion at the
same \(25N\) budget.  SGM uses the
unshifted schedule \(\eta_t=3.1490\times10^{-7}/\sqrt t\).  LSE uses
\(\mu/F_{\rm train}(y_{\rm WLS})
\in\{3\times10^{-2},10^{-2},3\times10^{-3},10^{-3}\}\)
and at most eight L-BFGS-B iterations per stage.  LP exchange terminates after
two full separation scans.

One absolute residual counts as one component value.  Query and runtime axes
exclude the common WLS fit, scalar-anchor construction, training-only tuning,
random-access setup, warm-up, and offline train/test scoring.  The common WLS
fit streams all \(N\) rows once.  The \SMax\ target summaries are medians of
seed-specific first stored crossings; all 20 seeds eventually cross.  Its
reported endpoints are componentwise medians of seed-specific outputs,
not metrics of coefficients averaged across seeds.  The primary held-out
metric is the maximum centered-logit distortion; accuracy and agreement with
the original model are secondary checks.

Optimization used the Python, NumPy, SciPy, and operating system
reported above.  SGM and LSE times are medians of two repeats, \SMax\ uses the
20 confirmatory seeds, and LP has one stored run; the same machine-specific
timing qualification applies.  Feature extraction used PyTorch
2.6.0.dev20241112 and Torchvision 0.20.0.dev20241112 with deterministic
algorithms on an NVIDIA GeForce GTX 1050 Max-Q GPU, but extraction time is not
included in optimizer timing.

Also, 12 of 15 WLS coefficients and 13 of 15 LP coefficients are at a box
boundary.

\subsection{Pointwise near-activity diagnostic}
\label{app:near-active-diagnostic}

At every stored coefficient vector \(x\), we compute all training components,
sort them, and define
\[
    \widehat k_\varepsilon(x)
    =\max\{k:F_{\rm train}(x)-\operatorname{AT}_k(x)\le\varepsilon/4\},
    \qquad \varepsilon=0.05\ell,
\]
where \(\ell\) is the corresponding numerical lower reference.  Table
\ref{tab:effective-k} reports medians and IQRs across confirmatory seeds.

\begin{table}[!htb]
    \caption{Pointwise effective number of near-active components.  Brackets
    show the IQR across 20 \SMax\ confirmatory runs.}
    \label{tab:effective-k}
    \centering
    \footnotesize
    \setlength{\tabcolsep}{5pt}
    \begin{tabular}{@{}lccc@{}}
        \toprule
        Problem & WLS start & First 5\% crossing & \(25N\) endpoint \\
        \midrule
        VFD & 2 & 85.5 [79.75, 88.25] & 92 [13.5, 127] \\
        Polynomial calibration & 1 & 1 [1, 1] & 3 [3, 3] \\
        \bottomrule
    \end{tabular}
\end{table}
\FloatBarrier

For comparison, the converged SOCP and LP reference points have
\(\widehat k_\varepsilon=109\) and 5, respectively.  Thus the VFD trajectory
shows many near-active components precisely in the moderate-accuracy regime.
The polynomial diagnostic remains small, so the sufficient top-\(k\) condition
alone does not explain that experiment's speedup.  These measurements are
pointwise evidence at stored iterates, not a numerical verification of the
supremum over the entire sublevel set in~\eqref{eq:localized-topk}.  The extra
full scans are offline diagnostics and are excluded from optimizer cost.

\end{document}